\newif\ifLTEX
\LTEXtrue

\documentclass[a4paper]{amsart}

\usepackage{mathtools,amssymb}
\usepackage[abbrev]{amsrefs}
\usepackage{xparse}

\ifLTEX
\usepackage{graphicx}
\else
\usepackage[dvipdfmx]{graphicx}
\fi
 
\usepackage{hyperref}
\usepackage{subcaption}
\captionsetup[subfigure]{labelfont=rm}
\usepackage{enumitem}
\usepackage{amsthm}
\usepackage{pdfpages}
\usepackage[all]{xy} 
\usepackage{float}
\usepackage{marginfix}


\pagestyle{headings}

\newtheorem{thm}{Theorem}[section]
\newtheorem{lem}[thm]{Lemma}

\newtheorem{prop}[thm]{Proposition}

\theoremstyle{definition}

\newtheorem{rem}[thm]{Remark}

\newcommand{\R}{\mathbb{R}}

\newcommand{\Z}{\mathbb{Z}}

\newcommand{\M}{\mathrm{Mod}_{0,2n+2}}

\newcommand{\LM}{\mathrm{LMod}_{2n+2}}

\newcommand{\Mg}{\mathrm{Mod}_{g}}
\newcommand{\Mgb}{\mathrm{Mod}_{g}^1}

\newcommand{\SM}{\mathrm{SMod}_{g;k}}

\newcommand{\B}{\mathcal{B}}

\numberwithin{equation}{section}


\allowdisplaybreaks
\sloppy

\title[Positive factorization for balanced superelliptic rotation]{A positive factorization for the balanced superelliptic rotation}

\author[G.~Omori]{Genki Omori}
\address{
(Genki Omori)
Department of Mathematics, Faculty of Science and Technology, Tokyo University of Science, 2641 Yamazaki, Noda-shi, Chiba, 278-8510 Japan
}
\email{omori\_genki@ma.noda.tus.ac.jp}

\if0
\author[]{}
\address{
()
}
\email{}
\fi

\subjclass[2010]{57S05, 57M07, 57R22}

\date{\today}

\begin{document}
\maketitle
\begin{abstract}
The balanced superelliptic rotation is a periodic map on an oriented closed surface of order $k\geq 3$. 
We give a positive factorization for the balanced superelliptic rotation. 
\end{abstract}

\section{Introduction}


Let $\Sigma _{g}$ be a connected closed oriented surface of genus $g\geq 0$. 
For a subset $A$ of $\Sigma _g$, the {\it mapping class group} $\mathrm{Mod}(\Sigma _{g}, A)$ of the pair $(\Sigma _{g}, A)$ is the group of isotopy classes of orientation-preserving self-diffeomorphisms on $\Sigma _{g}$ which preserve $A$ setwise. 
When $A$ is a set of distinct $n$ points, we denote the mapping class group by $\mathrm{Mod}_{g,n}$. 
We denote simply $\mathrm{Mod}_{g}=\mathrm{Mod}_{g,0}$. 
Dehn~\cite{Dehn} proved that $\mathrm{Mod}_{g}$ is generated by Dehn twists. 
As a well-known fact, a left-handed Dehn twist is a product of right-handed Dehn twists in $\Mg$. 
Hence every $f\in \Mg $ is expressed a product of right-handed Dehn twists. 
We call the product a \textit{positive factorization} for $f$. 

A self-diffeomorphism $\varphi $ on $\Sigma _g$ is a \textit{periodic map} if there exists an integer $k$ such that $\varphi ^k=1$. 
Positive factorizations of periodic maps were given by a lot of research, for instance, by Birman-Hilden~\cite{Birman-Hilden1}, Gurtas~\cite{Gurtas1, Gurtas2, Gurtas3}, Hirose~\cite{Hirose}, Ishizaka~\cite{Ishizaka}, Korkmaz~\cite{Korkmaz}, and Matsumoto~\cite{Matsumoto}. 
Birman and Hilden~\cite{Birman-Hilden1} gave a positive factorization for the hyperelliptic involution. 
In this paper, we give a positive factorization for a periodic map of order $k\geq 3$ which is a generalization of the hyperelliptic involution and is called the \textit{balanced superelliptic rotation}.  

For integers $n\geq 1$ and $k\geq 2$, we assume that $g=n(k-1)$. 
The \textit{balanced superelliptic rotation} $\zeta =\zeta _{g,k}$ is a periodic map on $\Sigma _g$ of order $k\geq 2$ with $2n+2$ fixed points $\widetilde{p}_1,\ \widetilde{p}_2,\ \dots ,\ \widetilde{p}_{2n+2}\in \Sigma _{g}$ as on the upper side in Figure~\ref{fig_bs_periodic_map} (for a precise definition, see Section~\ref{section_bscov}). 
When $k=2$, $\zeta =\zeta _{g,2}$ coincides with the hyperelliptic involution, and for $k\geq 3$, the balanced superelliptic rotation was introduced by Ghaswala and Winarski~\cite{Ghaswala-Winarski2}. 

\begin{figure}[h]
\includegraphics[scale=1.3]{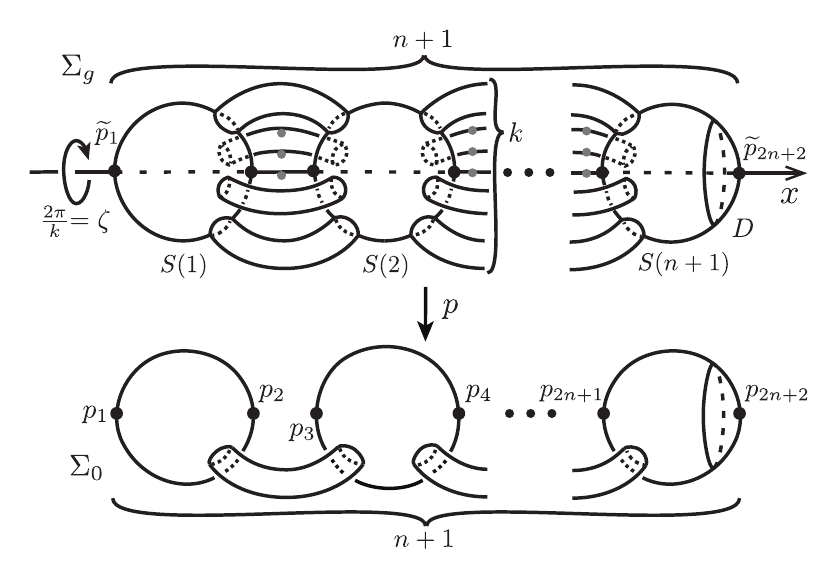}
\caption{The balanced superelliptic rotation $\zeta =\zeta _{g,k}$ on the balanced superelliptic covering map $p=p_{g,k}\colon \Sigma _g\to \Sigma _0$.}\label{fig_bs_periodic_map}
\end{figure}

Let $\widetilde{l}_i^l$ for $1\leq i\leq 2n+1$ and $1\leq l\leq k$ be a simple arc on $\Sigma _g$ which satisfies $\partial \widetilde{l}_i^l=\{ \widetilde{p}_{i}, \widetilde{p}_{i+1}\}$, $\zeta (\widetilde{l}_i^l)=\widetilde{l}_i^{l+1}$ for  $1\leq l\leq k-1$, and $\zeta (\widetilde{l}_i^k)=\widetilde{l}_i^{1}$ as on the upper side in Figure~\ref{fig_isotopy_surface_3-handles}. 
We consider a diffeomorphism of $\Sigma _g$ as in Figure~\ref{fig_isotopy_surface_3-handles} and identify $\Sigma _g$ with the surface as on the lower side in Figure~\ref{fig_isotopy_surface_3-handles}. 
Let $\gamma _i^l$ for $1\leq i\leq 2n+1$ and $1\leq l\leq k$ be a simple closed curve on $\Sigma _g$ which satisfies $\zeta (\gamma _i^l)=\gamma _i^{l+1}$ for $1\leq l\leq k-1$ as in Figure~\ref{fig_scc_c_il} and $\alpha _{i}^{l}$ for $1\leq i\leq n$ and $1\leq l\leq k-1$ a simple closed curve on $\Sigma _g$ which satisfies $\zeta (\alpha _i^l)=\alpha _i^{l+1}$ for $1\leq l\leq k-1$ as in Figure~\ref{fig_scc_a_il}. 
For a simple closed curve $\gamma $ on $\Sigma _g$, we denote by $t_\gamma $ the right-handed Dehn twist along $\gamma $. 
For $f,\ h\in \Mg $, the product $hf\in \Mg $ means that $f$ apply first and we abuse notation and denote a diffeomorphism and its isotopy class by the same symbol.  

\begin{figure}[h]
\includegraphics[scale=0.7]{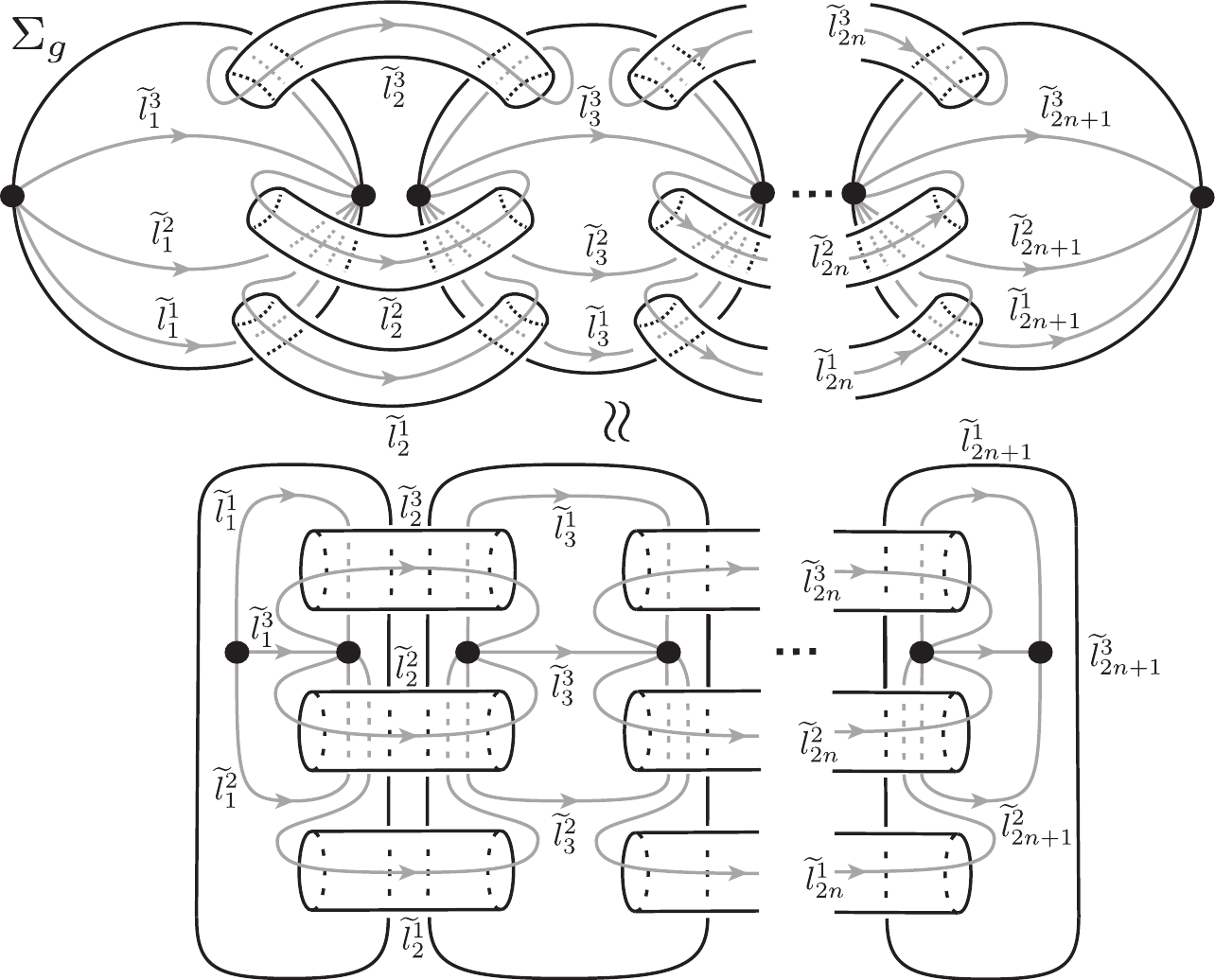}
\caption{A natural diffeomorphism of $\Sigma _g$ when $k=3$.}\label{fig_isotopy_surface_3-handles}
\end{figure}

\begin{figure}[h]
\includegraphics[scale=0.85]{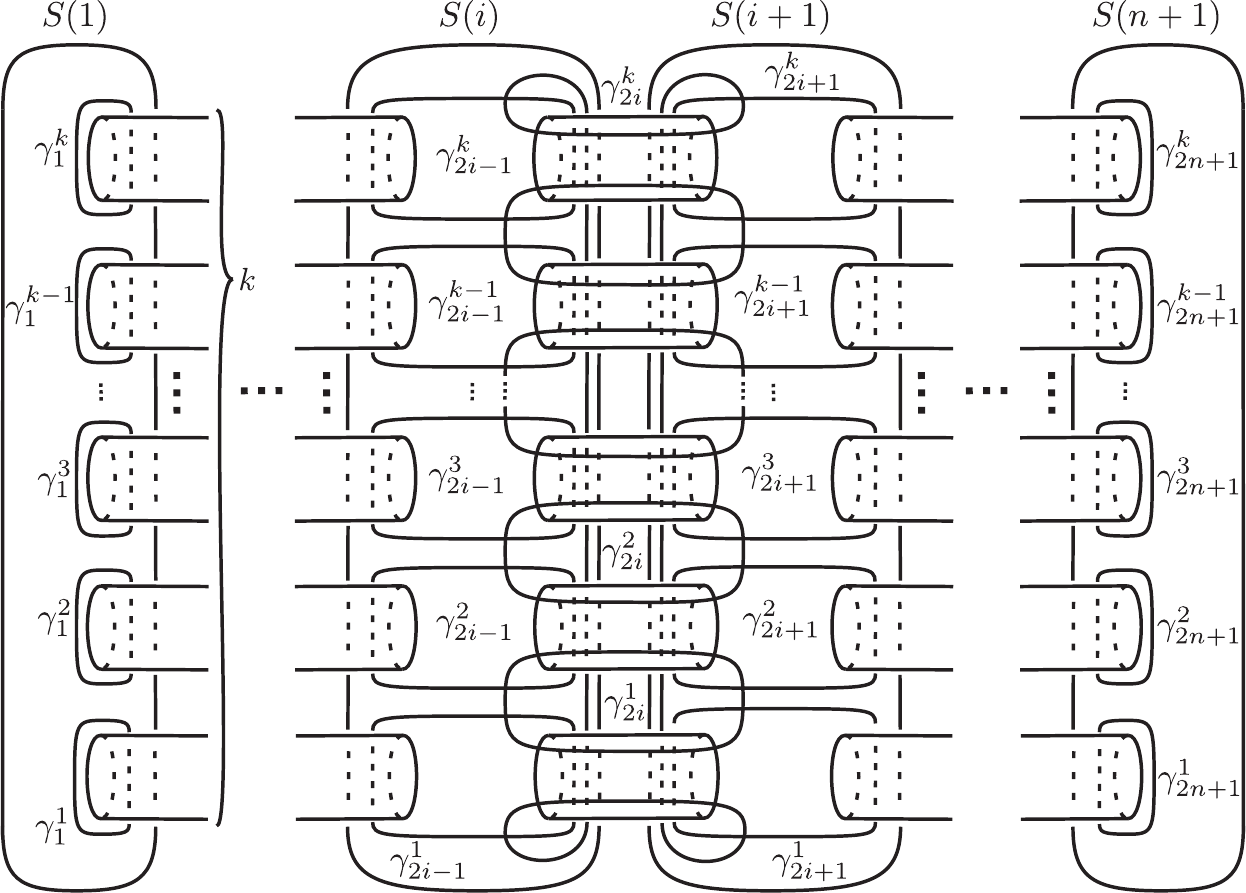}
\caption{Simple closed curves $\gamma _i^l$ on $\Sigma _g$ for $1\leq i\leq 2n+1$ and $1\leq l\leq k$.}\label{fig_scc_c_il}
\end{figure}

\begin{figure}[h]
\includegraphics[scale=0.83]{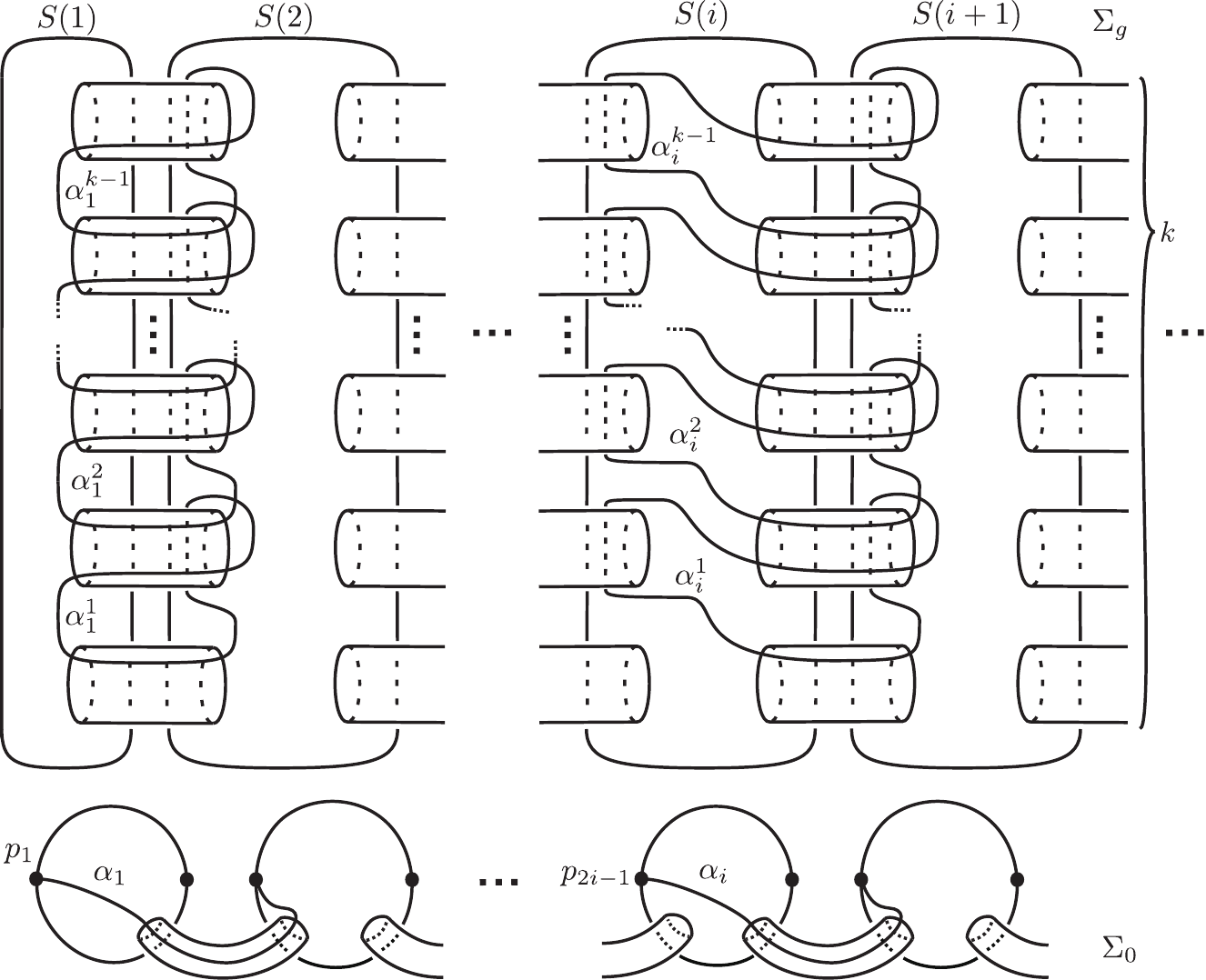}
\caption{Simple closed curves $\alpha _i^l$ on $\Sigma _g$ for $1\leq i\leq n$ and $1\leq l\leq k-1$.}\label{fig_scc_a_il}
\end{figure}

\newpage
Put
\begin{itemize}
\item $\widetilde{h}_{2i-1}=t_{\gamma _{2i-1}^1}t_{\gamma _{2i}^{1}}t_{\gamma _{2i-1}^2}t_{\gamma _{2i}^{2}}t_{\gamma _{2i-1}^3}\cdots t_{\gamma _{2i}^{k-1}}t_{\gamma _{2i-1}^{k}}$ for $1\leq i\leq n$, 
\item $\widetilde{a}_i=t_{\alpha _i^1}t_{\alpha _i^2}\cdots t_{\alpha _i^{k-1}}$\quad for $1\leq i\leq n$,
\item $\widetilde{t}_{2n+1}=t_{\gamma _{2n+1}^1}t_{\gamma _{2n+1}^2}\cdots t_{\gamma _{2n+1}^{k}}$. 
\end{itemize}
The main theorem in this paper is as follows. 

\begin{thm}\label{main_thm}
Let $\zeta =\zeta _{g,k}$ be the balanced superelliptic rotation on $\Sigma _g$ for $g=n(k-1)$ with $n\geq 1$ and $k\geq 3$. 
Then the relation
\[
\zeta =\widetilde{h}_1\widetilde{h}_3\cdots \widetilde{h}_{2n-1}\widetilde{t}_{2n+1}\widetilde{a}_n\cdots \widetilde{a}_2\widetilde{a}_1
\]
holds in $\Mg $. 
\end{thm}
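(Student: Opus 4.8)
The natural setting is the Birman--Hilden correspondence for the balanced superelliptic covering $p=p_{g,k}\colon \Sigma_g\to\Sigma_0$ recalled above. Since $\zeta$ generates the deck group of $p$, the symmetric mapping classes upstairs are exactly the lifts of liftable mapping classes of the $(2n+2)$-marked sphere, and the lift of a given liftable class is well defined only up to composition with a power of $\zeta$. My plan is: realize the right-hand side of the theorem as the lift of an explicit word in the mapping class group $\M$ of the marked sphere; identify that word with a lift of the identity; and then carefully pin down the resulting power of $\zeta$. To make the last step tractable I would first pass to the bordered and pointed variants $\Mb$, $\Mp$ and their liftable subgroups $\LMb$, $\LMp$, where a liftable class has a \emph{unique} lift once its behaviour near the distinguished boundary or marked point is normalized; this removes the $\zeta$-ambiguity, and at the end one recovers the statement in $\Mg$ by capping the boundary disk or forgetting the marked point.

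Next I would identify each building block as a genuine lift. From the figures, $\gamma_i^1,\dots,\gamma_i^k$ are the $k$ components of the preimage $p^{-1}(\delta_i)$ of a simple closed curve $\delta_i$ on $\Sigma_0$ enclosing two branch points of opposite, hence total trivial, monodromy; thus $\delta_i$ has $k$ disjoint unramified preimages, and the Dehn twist $t_{\delta_i}$ --- equivalently the square of the associated half-twist --- lifts to $t_{\gamma_i^1}\cdots t_{\gamma_i^k}$. Similarly the $\alpha_i^l$ come from a curve $\varepsilon_i$. Comparing with the definitions, $\widetilde t_{2n+1}$ is the lift of $t_{\delta_{2n+1}}$, each $\widetilde a_i$ is the lift of the analogous element built from $\varepsilon_i$, and each $\widetilde h_{2i-1}$ should be the lift of a single half-twist interchanging two branch points of \emph{equal} monodromy (such a half-twist is liftable even for $k\geq 3$), whose support meets the two columns $2i-1$ and $2i$. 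Proving these identifications precisely --- in particular that the claimed \emph{interleaved} products of $t_\gamma$'s, in the stated order, really are the honest lifts --- is the technical heart; I expect to carry it out by cutting $\Sigma_g$ along the arcs $\widetilde l_i^l$ and using the explicit handle picture of Figure~\ref{fig_isotopy_surface_3-handles}.

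With the blocks so identified, Theorem~\ref{main_thm} reduces to a single statement on the marked sphere: the word obtained from $\widetilde h_1\widetilde h_3\cdots\widetilde h_{2n-1}\widetilde t_{2n+1}\widetilde a_n\cdots\widetilde a_1$ by replacing each block with the class of $\Sigma_0$ it lifts is isotopic to the identity. This I would prove by an explicit isotopy on $\Sigma_0$, or by assembling standard chain and lantern relations among half-twists around consecutive branch points; the telescoping pattern of the word --- columns $2i-1,2i$ handled by $\widetilde h_{2i-1}$, the last column by $\widetilde t_{2n+1}$, the $\widetilde a_i$ closing things up --- is exactly what such a relation produces. The main obstacle is then the bookkeeping: since the lift of the identity is only determined up to $\langle\zeta\rangle$, I must show the particular lift built from our word is $\zeta$ and not another power. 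I would settle this inside $\LMp$ (or $\LMb$), where lifts are unique, by tracking the normalizations through the product; as an independent check one can verify that the right-hand side sends $\widetilde l_i^l$ to $\widetilde l_i^{l+1}$ for one choice of $i,l$, or compute its action on $H_1(\Sigma_g;\Q)$ (each $t_\gamma$ acting as a transvection), which already separates $\zeta$ from its nontrivial powers.

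Should the global relation on the sphere prove unwieldy, the fallback is induction on $n$: $\Sigma_{n(k-1)}$ is obtained from $\Sigma_{(n-1)(k-1)}$ by attaching a genus $k-1$ piece along a $\zeta$-invariant separating curve, with $\zeta$ restricting to balanced superelliptic rotations on the two pieces, and the word for parameter $n$ differs from that for $n-1$ only by the new blocks $\widetilde h_{2n-1}$, $\widetilde t_{2n+1}$, $\widetilde a_n$ (with the old last block $\widetilde t_{2n-1}$ absorbed into $\widetilde h_{2n-1}$). One would then reduce to the base case $n=1$ --- where $\zeta=\widetilde h_1\widetilde t_3\widetilde a_1$ on $\Sigma_{k-1}$ can be checked directly, again via lifting from the $4$-marked sphere --- together with a commutation and gluing lemma relating the factorizations of the two pieces.
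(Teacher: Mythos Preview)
Your overall strategy is exactly the paper's: identify each block $\widetilde h_{2i-1},\widetilde t_{2n+1},\widetilde a_i$ as a lift of an explicit liftable element on the marked sphere, show that the resulting word downstairs is the identity, and then use the Birman--Hilden exact sequence to conclude that the product upstairs is some $\zeta^l$ and determine $l$. The paper does the last two steps by direct pictures: Proposition~\ref{prop_LM_rel} checks $h_1h_3\cdots h_{2n-1}t_{2n+1}a_n\cdots a_1=1$ in $\LM$ by following the arcs $l_i$, and $l=1$ is fixed by computing the image of the single curve $\gamma_{2n+1}^1$ under the product. Your alternatives for the last step (passing to $\Mgb$ or $\Mgp$, or checking on an arc $\widetilde l_i^l$, or on homology) are all reasonable; the paper in fact uses the bordered picture later, in Lemma~\ref{lem_section}. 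The lifting identities themselves are not proved in this paper but quoted from \cite{Hirose-Omori}.

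There is, however, a concrete error in your identifications that would derail the argument if followed literally. You describe $\widetilde h_{2i-1}$ as the lift of ``a single half-twist interchanging two branch points of equal monodromy'' and $\widetilde a_i$ as built analogously to $\widetilde t_{2n+1}$ from the preimage of a simple closed curve $\varepsilon_i$; these roles are essentially reversed. In the paper $\widetilde a_i$ (with $k-1$ Dehn-twist factors, not $k$) is the lift of the half-twist $a_i=\sigma[\alpha_i]$ along an arc joining $p_{2i-1}$ to $p_{2i+1}$, so the $\alpha_i^l$ are \emph{not} the full preimage of any curve on $\Sigma_0$; whereas $\widetilde h_{2i-1}$ (with $2k-1$ factors) is the lift of the half-rotation $h_{2i-1}=\sigma_{2i-1}\sigma_{2i}\sigma_{2i-1}$ of the triple $p_{2i-1},p_{2i},p_{2i+1}$, which has the same permutation as a half-twist but is a genuinely different mapping class. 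With the wrong downstairs word your triviality check on $\Sigma_0$ would fail. Once this is corrected, your plan and the paper's proof coincide; neither the bordered/pointed normalization nor the inductive fallback is needed.
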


We prove Theorem~\ref{main_thm} in Section~\ref{section_proof_main-thm}. 
As an application, in Section~\ref{section_application}, we observe topological properties of the Lefschetz fiblation over 2-sphere corresponding to the relation $(\widetilde{h}_1\widetilde{h}_3\cdots \widetilde{h}_{2n-1}\widetilde{t}_{2n+1}\widetilde{a}_n\cdots \widetilde{a}_2\widetilde{a}_1)^k=1$.

\section{Preliminaries}\label{Preliminaries}

\subsection{The balanced superelliptic covering space}\label{section_bscov}

In this section, we review the definitions of the balanced superelliptic covering space and the balanced superelliptic rotation from Section~2.1 in \cite{Hirose-Omori}. 
For integers $n\geq 1$ and $k\geq 2$ with $g=n(k-1)$, we describe the surface $\Sigma _{g}$ as follows. 
We take the unit 2-sphere $S^2=S(1)$ in $\R ^3$ and $n$ mutually disjoint parallel copies $S(2),\ S(3),\ \dots ,\ S(n+1)$ of $S(1)$ by translations along the x-axis such that 
\[
\max \bigl( S(i)\cap (\R \times \{ 0\}\times \{ 0\} )\bigr) <\min \bigl( S(i+1)\cap (\R \times \{ 0\}\times \{ 0\} )\bigr)
\]
for $1\leq i\leq n$ (see Figure~\ref{fig_bs_periodic_map}). 
Let $\zeta$ be the $(-\frac{2\pi }{k})$-rotation of $\R ^3$ on the $x$-axis. 
Then we remove $2k$ disjoint open disks in $S(i)$ for $2\leq i\leq n$ and $k$ disjoint open disks in $S(i)$ for $i\in \{ 1,\ n+1\}$ which are setwisely preserved by the action of $\zeta $, and connect $k$ boundary components of the punctured $S(i)$ and ones of the punctured $S(i+1)$ by $k$ annuli such that the union of the $k$ annuli is preserved by the action of $\zeta $ for each $1\leq i\leq n$ as in Figure~\ref{fig_bs_periodic_map}. 
Since the union of the punctured $S(1)\cup S(2)\cup \cdots \cup S(n+1)$ and the attached $n\times k$ annuli is diffeomorphic to $\Sigma _{g=n(k-1)}$, 
we regard this union as $\Sigma _g$. 

By the construction above, the action of $\zeta $ on $\R ^3$ induces the action on $\Sigma _g$. 
We call $\zeta _{g,k}=\zeta |_{\Sigma _g}\colon \Sigma _g\to \Sigma _g$ the \textit{balanced superelliptic rotation} on $\Sigma _g$ and denote simply $\zeta _{g,k}=\zeta $. 
When $k=2$, $\zeta _{g,2}$ coincides with the hyperelliptic involution. 

Remark that the quotient space $\Sigma _g/\left< \zeta \right>$ is diffeomorphic to $\Sigma _0$ and the quotient map $p=p_{g,k}\colon \Sigma _g\to \Sigma _0$ is a branched covering map with $2n+2$ branch points in $\Sigma _0$. 
We call the branched covering map $p\colon \Sigma _g\to \Sigma _0$ the \textit{balanced superelliptic covering map}. 
Denote by $\widetilde{p}_1,\ \widetilde{p}_2,\ \dots ,\ \widetilde{p}_{2n+2}\in \Sigma _g$ the fixed points of $\zeta $ such that $\widetilde{p}_i<\widetilde{p}_{i+1}$ in $\R =\R \times \{ 0\} \times \{ 0\}$ for $1\leq i\leq 2n+1$, by $p_i\in \Sigma _0$ for $1\leq i\leq 2n+2$ the image of $\widetilde{p}_i$ by $p$ (i.e. $p_1,\ p_2,\ \dots ,\ p_{2n+2}\in \Sigma _0$ are branch points of $p$), and by $\B $ the set of the branch points $p_1,\ p_2,\ \dots ,\ p_{2n+2}$.

\subsection{The Birman-Hilden correspondence}\label{section_BH}

In this section, we review the Birman-Hilden correspondence~\cite{Birman-Hilden2}. 
For $g=n(k-1)\geq 1$, an orientation-preserving self-diffeomorphism $\varphi $ on $\Sigma _{g}$ is \textit{symmetric} for $\zeta =\zeta _{g,k}$ if $\varphi \left< \zeta \right> \varphi ^{-1}=\left< \zeta \right> $. 
The \textit{balanced superelliptic mapping class group} (or \textit{symmetric mapping class group}) $\SM $ is the subgroup of $\Mg $ which consists of elements represented by symmetric diffeomorphisms. 
$\mathrm{SMod}_{g;2}$ is called the \textit{hyperelliptic mapping class group}. 
Birman and Hilden~\cite{Birman-Hilden3} showed that $\SM $ coincides with the group of symmetric isotopy classes of symmetric diffeomorphisms on $\Sigma _g$. 

An orientation-preserving self-diffeomorphism $\varphi $ on $\Sigma _{0}$ is \textit{liftable} with respect to $p=p_{g,k}$ if there exists an orientation-preserving self-diffeomorphism $\widetilde{\varphi }$ on $\Sigma _{g}$ such that $p\circ \widetilde{\varphi }=\varphi \circ p$, namely, the following diagram commutes: 
\[
\xymatrix{
\Sigma _g \ar[r]^{\widetilde{\varphi }} \ar[d]_p &  \Sigma _{g} \ar[d]^p \\
\Sigma _{0}  \ar[r]_{\varphi } &\Sigma _{0}. \ar@{}[lu]|{\circlearrowright} 
}
\] 
Put $\B =\{ p_1,\ p_2,\ \dots ,\ p_{2n+2}\}$ and we regard $\M =\mathrm{Mod}(\Sigma _0, \B )$. 
The \textit{liftable mapping class group} $\mathrm{LMod}_{2n+2;k}$ is the subgroup of $\M $ which consists of elements represented by liftable diffomorphisms. 
For $k=2$, we have $\mathrm{LMod}_{2n+2;2}=\M $ by Birman and Hilden~\cite{Birman-Hilden1}. 
Since symmetric diffeomorphisms for $\zeta $ preserve $\widetilde{\B }=p^{-1}(\B )\subset \Sigma _g$, we have the natural homomorphism $\theta \colon \SM \to \mathrm{LMod}_{2n+2;k}$. 
By Birman and Hilden~\cite{Birman-Hilden2}, we have the following lemma. 

\begin{lem}\label{lem_BH}
For $n\geq 1$ and $k\geq 2$ with $g=n(k-1)$, we have the following exact sequence: 
\begin{eqnarray}\label{exact_BH}
1\longrightarrow \left< \zeta \right> \longrightarrow \SM \stackrel{\theta }{\longrightarrow }\mathrm{LMod}_{2n+2;k} \longrightarrow 1. 
\end{eqnarray}
\end{lem}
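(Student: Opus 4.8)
The plan is to establish the three properties of the homomorphism $\theta\colon\SM\to\mathrm{LMod}_{2n+2;k}$ that make the sequence \eqref{exact_BH} exact: that $\theta$ is well defined on mapping classes, that it is surjective, and that $\ker\theta=\langle\zeta\rangle$. The decisive input is the Birman--Hilden property recalled above, that $\SM$ equals the group of \emph{symmetric} isotopy classes of symmetric diffeomorphisms; it is what lets us descend isotopies through $p$.

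First I would make $\theta$ explicit. A symmetric diffeomorphism $\varphi$ satisfies $\varphi\zeta\varphi^{-1}=\zeta^{l}$ for some $l$ with $\gcd(l,k)=1$, so it carries $\mathrm{Fix}(\zeta)=\widetilde{\B}$ onto $\mathrm{Fix}(\zeta^{l})=\widetilde{\B}$ and permutes the $\langle\zeta\rangle$-orbits; hence it descends to a diffeomorphism $\bar\varphi$ of $\Sigma_0=\Sigma_g/\langle\zeta\rangle$ with $p\circ\varphi=\bar\varphi\circ p$, and $\bar\varphi$ preserves $\B=p(\widetilde{\B})$ and is liftable, so $[\bar\varphi]\in\mathrm{LMod}_{2n+2;k}$. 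If $\varphi_0$ and $\varphi_1$ are symmetric diffeomorphisms representing the same class of $\Mg$, the Birman--Hilden property gives a symmetric isotopy between them; being $\langle\zeta\rangle$-equivariant, it descends to an isotopy in $\M$ from $\bar\varphi_0$ to $\bar\varphi_1$. Thus $[\varphi]\mapsto[\bar\varphi]$ is well defined and, since $\overline{\varphi\psi}=\bar\varphi\,\bar\psi$, is a homomorphism; this is $\theta$.

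For surjectivity, let $\bar\varphi$ represent a class of $\mathrm{LMod}_{2n+2;k}$ and choose a lift $\varphi$ with $p\circ\varphi=\bar\varphi\circ p$. Over the genuine regular covering $p\colon\Sigma_g\smallsetminus\widetilde{\B}\to\Sigma_0\smallsetminus\B$ with deck group $\langle\zeta\rangle$ one has $p\circ(\varphi\zeta\varphi^{-1})=\bar\varphi\circ p\circ\zeta\circ\varphi^{-1}=\bar\varphi\circ p\circ\varphi^{-1}=p$, so $\varphi\zeta\varphi^{-1}\in\langle\zeta\rangle$ and $\varphi$ is symmetric; hence $[\varphi]\in\SM$ and $\theta([\varphi])=[\bar\varphi]$. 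For the kernel, $\zeta$ is symmetric with $\bar\zeta=\mathrm{id}_{\Sigma_0}$, so $\langle\zeta\rangle\subseteq\ker\theta$; moreover $\langle\zeta\rangle\hookrightarrow\SM$ is injective, because for $0<j<k$ the periodic map $\zeta^{j}$ fixes exactly the $2n+2$ points $\widetilde p_1,\dots,\widetilde p_{2n+2}$, each with Lefschetz index $+1$, so its Lefschetz number is $2n+2>0\geq 2-2g=L(\mathrm{id}_{\Sigma_g})$ and therefore $[\zeta^{j}]\neq 1$ in $\Mg$. Conversely, if $\theta([\varphi])=1$ then $\bar\varphi$ is isotopic to $\mathrm{id}_{\Sigma_0}$ rel $\B$; such an isotopy fixes $\B$ pointwise, hence restricts to an isotopy of $\Sigma_0\smallsetminus\B$ which lifts to an isotopy of $\Sigma_g\smallsetminus\widetilde{\B}$ from $\mathrm{id}$ to a lift of $\bar\varphi$; as any two lifts of $\bar\varphi$ differ by a deck transformation, $\varphi$ is isotopic on $\Sigma_g\smallsetminus\widetilde{\B}$ to some $\zeta^{j}$, and this isotopy extends smoothly across $\widetilde{\B}$. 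Hence $[\varphi]=[\zeta^{j}]$ in $\Mg$, so in $\SM$, giving $\ker\theta=\langle\zeta\rangle$.

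The crux is the inclusion $\ker\theta\subseteq\langle\zeta\rangle$: one must lift an isotopy rel $\B$ through the \emph{branched} covering $p$ and then control and smoothly extend the lifted isotopy across the ramification points $\widetilde{\B}$, where $p$ fails to be a covering. The companion point --- that well-definedness of $\theta$ genuinely requires \emph{symmetric} isotopies rather than arbitrary ones --- is what makes the Birman--Hilden property indispensable, though it is immediate once that property is granted.
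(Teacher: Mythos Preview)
The paper does not supply its own proof of this lemma: it states the result and attributes it directly to Birman and Hilden~\cite{Birman-Hilden2}, so there is no argument in the paper to compare your proposal against. What you have written is a correct outline of the standard Birman--Hilden argument, and you have identified the genuine content accurately: surjectivity and the inclusion $\langle\zeta\rangle\subseteq\ker\theta$ are formal, while well-definedness of $\theta$ rests on the equality of $\SM$ with the group of symmetric isotopy classes (which the paper records from~\cite{Birman-Hilden3}), and the inclusion $\ker\theta\subseteq\langle\zeta\rangle$ requires lifting an isotopy rel~$\B$ through the branched cover and controlling it over~$\widetilde\B$.

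Two small remarks on the write-up. First, the extension of the lifted isotopy across $\widetilde\B$ can be justified more cleanly than by a local smoothness argument: once you know $\zeta^{-j}\varphi$ is isotopic to the identity on $\Sigma_g\smallsetminus\widetilde\B$, the standard identification $\mathrm{Mod}(\Sigma_g\smallsetminus\widetilde\B)\cong\mathrm{Mod}(\Sigma_g,\widetilde\B)$ followed by the forgetful map to $\Mg$ gives $[\zeta^{-j}\varphi]=1$ in $\Mg$ directly. Second, your Lefschetz-number computation for the injectivity of $\langle\zeta\rangle\hookrightarrow\Mg$ is fine; alternatively one can quote the classical fact that for $g\geq1$ a nontrivial periodic diffeomorphism of $\Sigma_g$ is never isotopic to the identity.
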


\subsection{Liftable elements for the balanced superelliptic covering map}\label{section_liftable-element}

Assume that $k\geq 3$ in Section~\ref{section_liftable-element}. 
In this section, we introduce some liftable diffeomorphisms on $\Sigma _0$ for $p=p_{g,k}$ from Section~2.2 in \cite{Hirose-Omori}. 
Let $l_i$ $(1\leq i\leq 2n+1)$ be an oriented simple arc on $\Sigma _0$ whose endpoints are $p_i$ and $p_{i+1}$ as in Figure~\ref{fig_path_l}. 
Put $L=l_1\cup l_2\cup \cdots \cup l_{2n+1}$. 
The isotopy class of a diffeomorphism $\varphi $ on $\Sigma _0$ relative to $\B $ is determined by the isotopy class of the image of $L$ by $\varphi $ relative to  $\B $. 
We identify $\Sigma _0$ with the surface on the lower side in Figure~\ref{fig_path_l} by some diffeomorphism of $\Sigma _0$. 

\begin{figure}[h]
\includegraphics[scale=1.4]{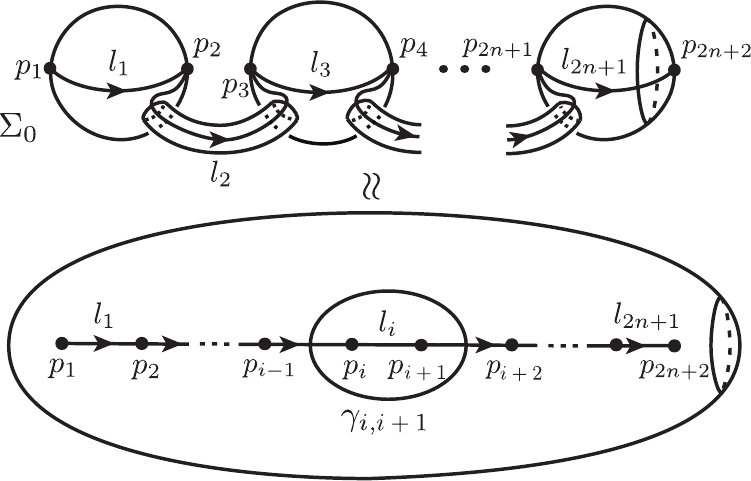}
\caption{A natural diffeomorphism of $\Sigma _0$, arcs $l_1,\ l_2,\ \dots ,\ l_{2n+1}$, and a simple closed curve $\gamma _{i,j}$ on $\Sigma _0$ for $1\leq i<j\leq 2n+2$.}\label{fig_path_l}
\end{figure}

Let $l$ be a simple arc on $\Sigma _0$ whose endpoints lie in $\B $. 
A regular neighborhood $\mathcal{N}$ of $l$ in $\Sigma _0$ is diffeomorphic to a 2-disk. 
Then the half-twist $\sigma [l]$ is a self-diffeomorphism on $\Sigma _0$ which is described as the result of anticlockwise half-rotation of $l$ in $\mathcal{N}$ as in Figure~\ref{fig_sigma_l}. 
$\sigma [l]$ is called the \textit{half-twist} along $l$. 
We define $\sigma _i=\sigma [l_i]$ for $1\leq i\leq 2n+1$. 
As a well-known result, $\M $ is generated by $\sigma _1$, $\sigma _2,\ \dots $, $\sigma _{2n+1}$ (see for instance Section~9.1.4 in \cite{Farb-Margalit}). 
Since $\M $ is naturally acts on $\B $, we have the surjective homomorphism 
\[
\Psi \colon \M \to S_{2n+2}
\]
given by $\Psi (\sigma _i)=(i\ i+1)$, where $S_{2n+2}$ is the symmetric group of degree $2n+2$. 

\begin{figure}[h]
\includegraphics[scale=1.1]{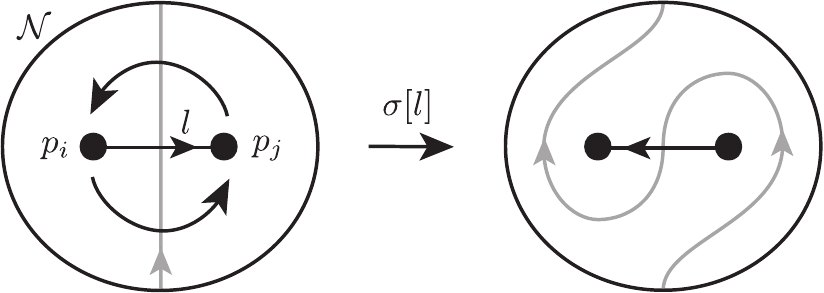}
\caption{The half-twist $\sigma [l]$ along $l$.}\label{fig_sigma_l}
\end{figure}

Put $\B _o=\{ p_1,\ p_3,\ \dots ,\ p_{2n+1}\}$ and $\B _e=\{ p_2,\ p_4,\ \dots ,\ p_{2n+2}\}$. 
An element $\sigma $ in $S_{2n+2}$ is \textit{parity-preserving} if $\sigma (\B _o)=\B _o$, and is \textit{parity-reversing} if $\sigma (\B _o)=\B _e$. 
An element $f$ in $\M $ is \textit{parity-preserving} (resp. \textit{parity-reversing}) if $\Psi (f)$ is \textit{parity-preserving} (resp. \textit{parity-reversing}). 
Let $W_{2n+2}$ be the subgroup of $S_{2n+2}$ which consists of parity-preserving or parity-reversing elements. 
Ghaswala and Winarski~\cite{Ghaswala-Winarski1} proved the following lemma. 
\begin{lem}[Lemma~3.6 in \cite{Ghaswala-Winarski1}]\label{lem_GW}
Let $\LM $ be the liftable mapping class group for the balanced superelliptic covering map $p_{g,k}$ for $n\geq 1$ and $k\geq3$ with $g=n(k-1)$. 
Then we have
\[
\LM =\Psi ^{-1}(W_{2n+2}).
\]
\end{lem}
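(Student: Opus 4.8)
The plan is to reduce liftability with respect to $p=p_{g,k}$ to an explicit condition on the permutation $\Psi(f)$, via covering space theory. First I would describe the covering away from the branch points. Restricting $p$ over $\Sigma_0\setminus\B$ gives a regular $\Z/k\Z$-covering with deck group $\langle\zeta\rangle$, classified by an epimorphism
\[
\phi\colon \pi_1(\Sigma_0\setminus\B)\longrightarrow \Z/k\Z .
\]
Writing $x_i$ for a positively oriented meridian loop encircling $p_i$ once (with $x_1x_2\cdots x_{2n+2}=1$ in $\pi_1(\Sigma_0\setminus\B)$), the balanced superelliptic cover is exactly the one with $\phi(x_i)=(-1)^{i+1}$, so $\phi(x_i)=1$ for $p_i\in\B _o$ and $\phi(x_i)=-1$ for $p_i\in\B _e$; note $\sum_i\phi(x_i)=(n+1)-(n+1)=0$, consistent with the surface relation. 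Since $\Z/k\Z$ is abelian, $\phi$ factors through $H_1(\Sigma_0\setminus\B;\Z)$ as $\bar\phi([x_i])=(-1)^{i+1}$.

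Next I would invoke the standard liftability criterion. A mapping class $f\in\M$ lifts to a homeomorphism of the unbranched cover if and only if $f_*(\ker\phi)=\ker\phi$; this is well defined independently of basepoint because $\ker\phi$ is normal. As $\ker\phi\supseteq[\pi_1,\pi_1]$, the condition is governed entirely by the induced action $\bar f_*$ on $H_1(\Sigma_0\setminus\B;\Z)$, namely $\ker\bar\phi=\ker(\bar\phi\circ\bar f_*)$. Two epimorphisms from an abelian group to $\Z/k\Z$ have the same kernel precisely when they differ by an automorphism of $\Z/k\Z$, so the criterion becomes: $f$ is liftable if and only if there is a unit $c\in(\Z/k\Z)^\times$ with $\bar\phi\circ\bar f_*=c\,\bar\phi$. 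The remaining point here is that a lift over $\Sigma_0\setminus\B$ extends to a diffeomorphism of the closed surface $\Sigma_g$; because every branch point is totally ramified (each $\phi(x_i)=\pm1$ generates $\Z/k\Z$, so the local model is $z\mapsto z^k$) and $f$ permutes the branch points, the local picture is preserved and the extension is automatic.

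Then I would compute $\bar f_*$ explicitly. Since $\M$ is generated by the half-twists $\sigma_i$, and each $\sigma_i$ sends $x_i\mapsto x_ix_{i+1}x_i^{-1}$, $x_{i+1}\mapsto x_i$ and fixes the other meridians, its action on $H_1$ transposes $[x_i]$ and $[x_{i+1}]$, matching $\Psi(\sigma_i)=(i\ i+1)$. Hence for any $f$ with $\sigma:=\Psi(f)$ one has $\bar f_*[x_i]=[x_{\sigma(i)}]$. Substituting into $\bar\phi\circ\bar f_*=c\,\bar\phi$ yields
\[
(-1)^{\sigma(i)+1}=c\,(-1)^{i+1}\qquad(1\leq i\leq 2n+2).
\]
Taking $i=1$ forces $c=(-1)^{\sigma(1)+1}\in\{1,-1\}$, so only $c=\pm1$ can occur. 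The choice $c=1$ says $\sigma(i)\equiv i\pmod 2$ for all $i$, i.e. $\sigma$ is parity-preserving, while $c=-1$ says $\sigma(i)\not\equiv i\pmod 2$ for all $i$, i.e. $\sigma$ is parity-reversing; conversely each such permutation admits the corresponding $c$. Therefore $f$ is liftable if and only if $\Psi(f)\in W_{2n+2}$, which is exactly $\LM=\Psi^{-1}(W_{2n+2})$.

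The main obstacle I expect is the correct formulation of the liftability criterion for a \emph{branched} cover: one must separate lifting over $\Sigma_0\setminus\B$ (the kernel condition on $\phi$) from extending the lift across the ramification points, and one must verify that the induced action on $H_1$ is precisely the permutation of meridian classes with no stray signs or inversions (using that $f$ is orientation-preserving). Once those points are pinned down, reducing to $c=\pm1$ and the parity dichotomy is a short computation. The argument also explains the hypothesis $k\geq 3$: for $k=2$ one has $1=-1$ in $\Z/2\Z$, the two parity conditions coincide and become vacuous, recovering $\mathrm{LMod}_{2n+2;2}=\M$, whereas for $k\geq 3$ the conditions are genuine and $W_{2n+2}$ is a proper subgroup of $S_{2n+2}$.
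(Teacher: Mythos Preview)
The paper does not give its own proof of this lemma; it is imported as Lemma~3.6 from Ghaswala--Winarski~\cite{Ghaswala-Winarski1}, so there is nothing in the present paper to compare your argument against. Your proof is correct and follows the standard covering-space route: classify the unbranched restriction of $p$ by $\phi\colon\pi_1(\Sigma_0\setminus\B)\to\Z/k\Z$ with $\phi(x_i)=(-1)^{i+1}$; reduce liftability to $f_*(\ker\phi)=\ker\phi$; since $\Z/k\Z$ is abelian, pass to $H_1$, where the $\M$-action is precisely the permutation $\Psi(f)$ of meridian classes; then $\bar\phi\circ\bar f_*=c\,\bar\phi$ with $c\in(\Z/k\Z)^\times$ forces $c\in\{\pm1\}$ and yields the parity dichotomy. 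Your handling of the basepoint issue (normality of $\ker\phi$) and of the extension across the totally ramified branch points is also correct. This is essentially the argument carried out in~\cite{Ghaswala-Winarski1}.
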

Lemma~\ref{lem_GW} implies that a mapping class $f\in \M $ lifts with respect to $p_{g,k}$ if and only if $f$ is parity-preserving or parity-reversing (in particular, when $k\geq 3$, the liftability of a diffeomorphism on $\Sigma _0$ does not depend on $k$). 
Hence we omit ``$k$'' in the notation of the liftable mapping class groups for $k\geq 3$ (i.e. we express $\mathrm{LMod}_{2n+2;k}=\LM $ for $k\geq 3$).  
We will introduce some explicit liftable elements as follows. 
\\

\paragraph{\emph{The Dehn twist $t_{i}$}}

Let $\gamma _{i,i+1}$ for $1\leq i\leq 2n+1$ be a simple closed curve on $\Sigma _0-\B $ such that $\gamma _{i,i+1}$ surrounds the two points $p_i$ and $p_{i+1}$ as in Figure~\ref{fig_path_l}. 
Then we define $t_{i}=t_{\gamma _{i,i+1}}$ for $1\leq i\leq 2n+1$. 
Since $\Psi (t_{i})=1 \in W_{2n+2}$, we have $t_i\in \LM $ for $1\leq i\leq 2n+1$ by Lemma~\ref{lem_GW}. \\

\paragraph{\emph{The half-twist $a_i$}}

Let $\alpha _i$ for $1\leq i\leq n$ be a simple arc whose endpoints are $p_{2i-1}$ and $p_{2i+1}$ as in Figure~\ref{fig_arcs_a_ib_i}. 
Then we define $a_i=\sigma [\alpha _i]$ for $1\leq i\leq n$. 
Note that $a_i=\sigma _{2i}\sigma _{2i-1}\sigma _{2i}^{-1}$ for $1\leq i\leq n$. 
Since $\Psi (a_i)=(2i-1\ 2i+1)$ for $1\leq i\leq n$, the mapping class $a_i$ is parity-preserving and $a_i\in \LM$ by Lemma~\ref{lem_GW}. \\

\begin{figure}[h]
\includegraphics[scale=1.3]{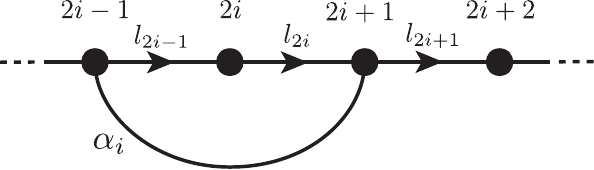}
\caption{An arc $\alpha _i$ for $1\leq i\leq n$ on $\Sigma _0$.}\label{fig_arcs_a_ib_i}
\end{figure}

\paragraph{\emph{The half-rotation $h_i$}}

Let $\mathcal{N}$ be a regular neighborhood of $l_i\cup l_{i+1}$ in $(\Sigma _0-\B )\cup \{ p_{i},\ p_{i+1},\ p_{i+2}\}$ for $1\leq i\leq 2n$. 
$\mathcal{N}$ is diffeomorphic to a 2-disk. 
Then we denote by $h_i$ the self-diffeomorphism on $\Sigma _0$ which is described as the result of anticlockwise half-rotation of $l_i\cup l_{i+1}$ in $\mathcal{N}$ as in Figure~\ref{fig_h_i}. 
Note that $h_i=\sigma _i\sigma _{i+1}\sigma _i$ for $1\leq i\leq 2n$. 
Since $\Psi (h_i)=(i\ i+2)$ for $1\leq i\leq 2n$, the mapping class $h_i$ is parity-preserving and $h_i\in \LM$ by Lemma~\ref{lem_GW}. 

\begin{figure}[h]
\includegraphics[scale=0.95]{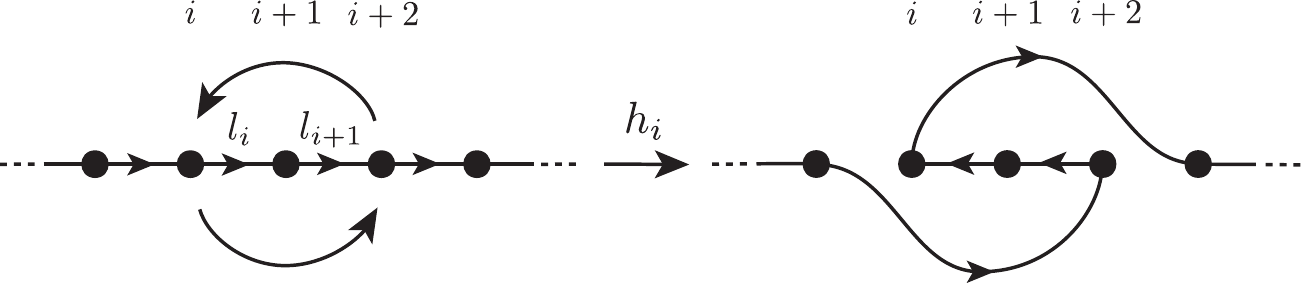}
\caption{The mapping class $h_i$ for $1\leq i\leq 2n$ on $\Sigma _0$.}\label{fig_h_i}
\end{figure}

\section{Proof of main theorem}\label{section_proof_main-thm}

In this section, we prove Theorem~\ref{main_thm}. 
Throughout this section, we assume that $n\geq 1$, $k\geq 3$, and $g=n(k-1)$.  
First, we have the following proposition. 

\begin{prop}\label{prop_LM_rel}
The relation
\[
h_1h_3\cdots h_{2n-1}t_{2n+1}a_n\cdots a_2a_1=1
\]
holds in $\LM $. 
\end{prop}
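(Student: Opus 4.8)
The plan is to prove the relation by a direct computation in $\M =\mathrm{Mod}(\Sigma _0,\B )$ using its standard generators $\sigma _1,\dots ,\sigma _{2n+1}$. The facts I would feed in are: the well-known identity $t_i=\sigma _i^2$ (the square of a half-twist along an arc is the Dehn twist along the boundary of a regular neighborhood of that arc); the expressions $h_{2i-1}=\sigma _{2i-1}\sigma _{2i}\sigma _{2i-1}$ and $a_i=\sigma _{2i}\sigma _{2i-1}\sigma _{2i}^{-1}$ recorded in Section~\ref{section_liftable-element}; and the \emph{surface relation}
\[
\sigma _1\sigma _2\cdots \sigma _{2n}\,\sigma _{2n+1}^2\,\sigma _{2n}\cdots \sigma _2\sigma _1=1 ,
\]
which holds in $\M $ (it is the defining relation, besides the braid relations, of the sphere braid group $B_{2n+2}(S^2)$, of which $\M $ is a quotient).

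The main idea is to eliminate $t_{2n+1}=\sigma _{2n+1}^2$ using the surface relation. Put $U=\sigma _1\sigma _2\cdots \sigma _{2n}$ and $\overline{U}=\sigma _{2n}\sigma _{2n-1}\cdots \sigma _1$, so the surface relation reads $t_{2n+1}=U^{-1}\overline{U}^{-1}$ and hence
\[
h_1h_3\cdots h_{2n-1}\,t_{2n+1}\,a_n\cdots a_1=\bigl(h_1h_3\cdots h_{2n-1}\,U^{-1}\bigr)\bigl(\overline{U}^{-1}\,a_n\cdots a_1\bigr).
\]
Each bracketed factor then collapses by a telescoping argument. Using the braid relation $\sigma _{2i-1}\sigma _{2i}\sigma _{2i-1}=\sigma _{2i}\sigma _{2i-1}\sigma _{2i}$ one obtains $(\sigma _{2i-1}\sigma _{2i}\sigma _{2i-1})\sigma _{2i}^{-1}\sigma _{2i-1}^{-1}=\sigma _{2i}$; since $\sigma _{2i}$ commutes with $\sigma _1,\dots ,\sigma _{2i-2}$, the blocks of $h_1h_3\cdots h_{2n-1}$, starting with $h_{2n-1}$, cancel one at a time against the initial segment of $U^{-1}=\sigma _{2n}^{-1}\sigma _{2n-1}^{-1}\cdots \sigma _1^{-1}$, leaving $h_1h_3\cdots h_{2n-1}\,U^{-1}=\sigma _2\sigma _4\cdots \sigma _{2n}$. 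A completely analogous computation, now using $\sigma _{2i}^{-1}(\sigma _{2i}\sigma _{2i-1}\sigma _{2i}^{-1})=\sigma _{2i-1}\sigma _{2i}^{-1}$, gives $\overline{U}^{-1}\,a_n\cdots a_1=\sigma _{2n}^{-1}\sigma _{2n-2}^{-1}\cdots \sigma _2^{-1}$. Multiplying the two expressions, all terms cancel and the product is $1$.

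The only point that needs care is the bookkeeping of the two telescoping computations — which generators commute past which, and the order in which the blocks collapse — which is routine and is cleanly organized by induction on $n$ (for $n=1$ it is just $\sigma _1\sigma _2\sigma _1\cdot \sigma _2^{-1}\sigma _1^{-1}=\sigma _2$ and $\sigma _1^{-1}\sigma _2^{-1}\cdot \sigma _2\sigma _1\sigma _2^{-1}=\sigma _2^{-1}$). A geometric alternative, in the spirit of the figures of Section~\ref{section_liftable-element}, would be to verify instead that $h_1h_3\cdots h_{2n-1}t_{2n+1}a_n\cdots a_1$ fixes the isotopy class of each arc $l_j$ relative to $\B $ — recall that a mapping class of $\Sigma _0$ is determined by its effect on $L=l_1\cup \dots \cup l_{2n+1}$ — but the algebraic route above seems the shortest.
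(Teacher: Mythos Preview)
Your proof is correct and takes a genuinely different route from the paper. The paper's argument is entirely pictorial: it checks that the product $h_1h_3\cdots h_{2n-1}t_{2n+1}a_n\cdots a_1$ fixes each arc $l_i$ up to isotopy rel $\B$, by drawing the successive images of $L=l_1\cup\cdots\cup l_{2n+1}$ under $a_1,\dots ,a_n,t_{2n+1},h_{2n-1},\dots ,h_1$ in a single figure --- precisely the ``geometric alternative'' you mention at the end. Your approach is instead algebraic: you substitute the expressions for $h_{2i-1}$, $a_i$, $t_{2n+1}$ in the standard half-twists $\sigma _j$, invoke the sphere relation $\sigma _1\cdots \sigma _{2n}\sigma _{2n+1}^2\sigma _{2n}\cdots \sigma _1=1$ to eliminate $t_{2n+1}$, and then telescope using only the braid and far-commutation relations. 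Your argument is self-contained and picture-free, and it makes transparent that this identity in $\LM$ is really just a repackaging of the sphere relation; the paper's argument is quicker to state but offloads the verification to a figure and does not expose that structural reason.
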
 

\begin{proof}
Recall that the isotopy class of a diffeomorphism $\varphi $ on $\Sigma _0$ relative to $\B $ is determined by the isotopy class of the image of $L=l_1\cup l_2\cup \cdots \cup l_{2n+1}$ by $\varphi $ relative to  $\B $. 
By an argument in Figure~\ref{fig_proof_zeta}, we have $h_1h_3\cdots h_{2n-1}t_{2n+1}a_n\cdots a_2a_1(l_i)=l_i$ for $1\leq i\leq 2n+1$. 
Therefore, we have $h_1h_3\cdots h_{2n-1}t_{2n+1}a_n\cdots a_2a_1=1$ in $\LM$ and we have completed the proof of Proposition~\ref{prop_LM_rel}. 
\end{proof}

\begin{figure}[h]
\includegraphics[scale=1.0]{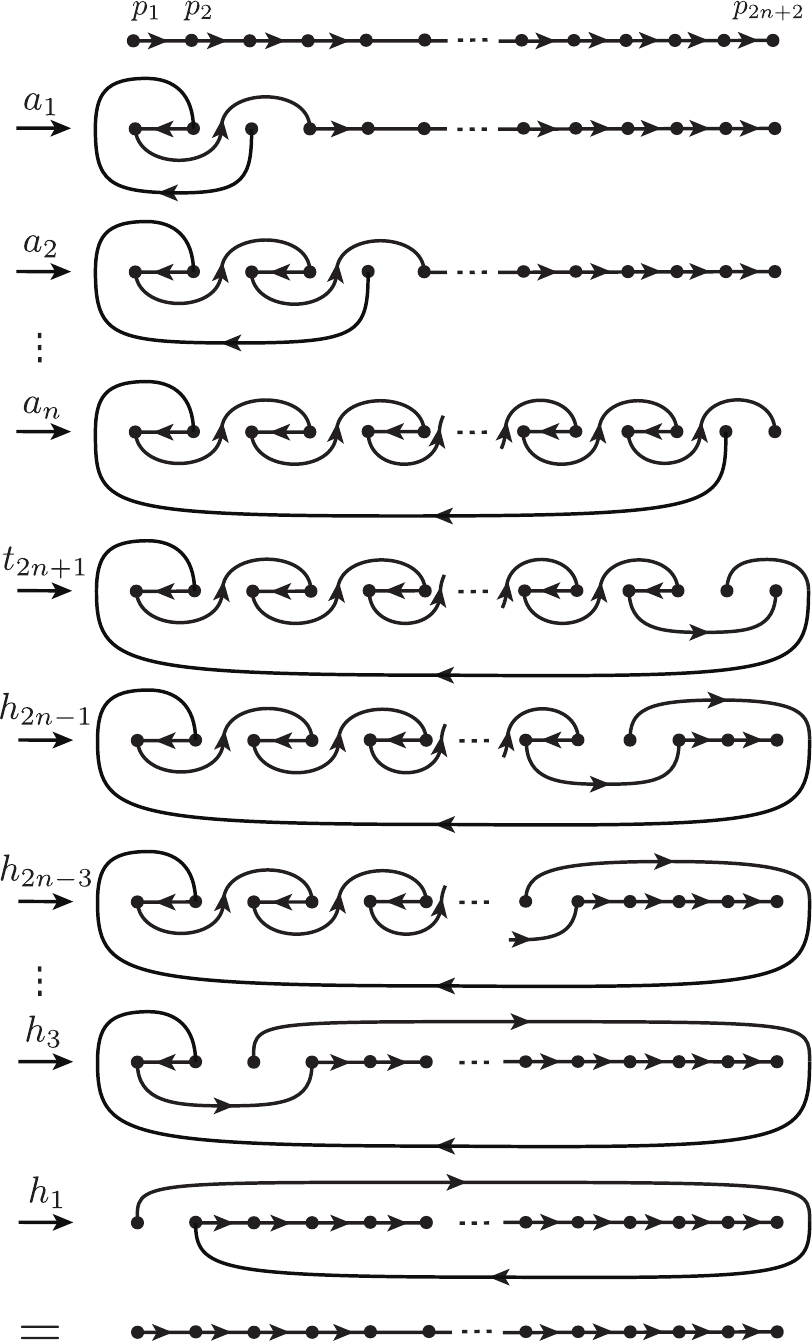}
\caption{The proof of $h_1h_3\cdots h_{2n-1}t_{2n+1}a_n\cdots a_2a_1(l_i)=l_i$ for $1\leq i\leq 2n+1$.}\label{fig_proof_zeta}
\end{figure}

Recall that simple closed curves $\gamma _i^l$ for $1\leq i\leq 2n+1$ and $1\leq l\leq k$, $\alpha _{i}^{l}$ for $1\leq i\leq n$ and $1\leq l\leq k-1$ on $\Sigma _g$ (see Figures~\ref{fig_scc_c_il} and \ref{fig_scc_a_il}) and mapping classes 
\begin{itemize}
\item $\widetilde{h}_{2i-1}=t_{\gamma _{2i-1}^1}t_{\gamma _{2i}^{1}}t_{\gamma _{2i-1}^2}t_{\gamma _{2i}^{2}}t_{\gamma _{2i-1}^3}\cdots t_{\gamma _{2i}^{k-1}}t_{\gamma _{2i-1}^{k}}$ for $1\leq i\leq n$, 
\item $\widetilde{a}_i=t_{\alpha _i^1}t_{\alpha _i^2}\cdots t_{\alpha _i^{k-1}}$\quad for $1\leq i\leq n$,
\item $\widetilde{t}_{2n+1}=t_{\gamma _{2n+1}^1}t_{\gamma _{2n+1}^2}\cdots t_{\gamma _{2n+1}^{k}}$. 
\end{itemize}

\begin{proof}[Proof of Theorem~\ref{main_thm}]
By Lemmas~6.3, 6.4, and 6.6 in ~\cite{Hirose-Omori}, the mapping classes $\widetilde{h}_{2i-1}$, $\widetilde{a}_i$, and $\widetilde{t}_{2n+1}$ are lifts of $h_{2i-1}$, $a_i$, and $t_{2n+1}$ with respect to $p$, respectively (namely, we have $\theta (\widetilde{h}_{2i-1})=h_{2i-1}$, $\theta (\widetilde{a}_i)=a_i$, and $\theta (\widetilde{t}_{2n+1})=t_{2n+1}$). 
Since $h_1h_3\cdots h_{2n-1}t_{2n+1}a_n\cdots a_2a_1=1$ in $\LM $ by Proposition~\ref{prop_LM_rel}, by the exact sequence~(\ref{exact_BH}) in Lemma~\ref{lem_BH}, there exists $0\leq l\leq k-1$ such that $\widetilde{h}_1\widetilde{h}_3\cdots \widetilde{h}_{2n-1}\widetilde{t}_{2n+1}\widetilde{a}_n\cdots \widetilde{a}_2\widetilde{a}_1=\zeta ^l$. 
Hence it is enough for completing the proof of Theorem~\ref{main_thm} to show that $\widetilde{h}_1\widetilde{h}_3\cdots \widetilde{h}_{2n-1}\widetilde{t}_{2n+1}\widetilde{a}_n\cdots \widetilde{a}_2\widetilde{a}_1(\gamma _{2n+1}^1)=\gamma _{2n+1}^2$. 

Since the support of $\widetilde{a}_i$ for $1\leq i\leq n-1$ is disjoint from $\gamma _{2n+1}^1$ and $\alpha _{n}^{l}$ for $2\leq l\leq k-1$ dose not intersects with $\gamma _{2n+1}^1$, we have 
\[
\widetilde{h}_1\widetilde{h}_3\cdots \widetilde{h}_{2n-1}\widetilde{t}_{2n+1}\widetilde{a}_n\cdots \widetilde{a}_2\widetilde{a}_1(\gamma _{2n+1}^1)=\widetilde{h}_1\widetilde{h}_3\cdots \widetilde{h}_{2n-1}\widetilde{t}_{2n+1}t_{\alpha _n^1}(\gamma _{2n+1}^1). 
\]
By an argument in Figure~\ref{fig_proof_zeta_prod}, we have $\widetilde{h}_{2n-1}\widetilde{t}_{2n+1}t_{\alpha _n^1}(\gamma _{2n+1}^1)=\gamma _{2n+1}^2$. 
Since the support of $\widetilde{h}_{2i-1}$ for $1\leq i\leq n-1$ is disjoint from $\gamma _{2n+1}^2$, we have $\widetilde{h}_1\widetilde{h}_3\cdots \widetilde{h}_{2n-1}\widetilde{t}_{2n+1}\widetilde{a}_n\cdots \widetilde{a}_2\widetilde{a}_1(\gamma _{2n+1}^1)=\gamma _{2n+1}^2$. 
Therefore we have completed the proof of Theorem~\ref{main_thm}. 
\end{proof}

\begin{figure}[h]
\includegraphics[scale=0.57]{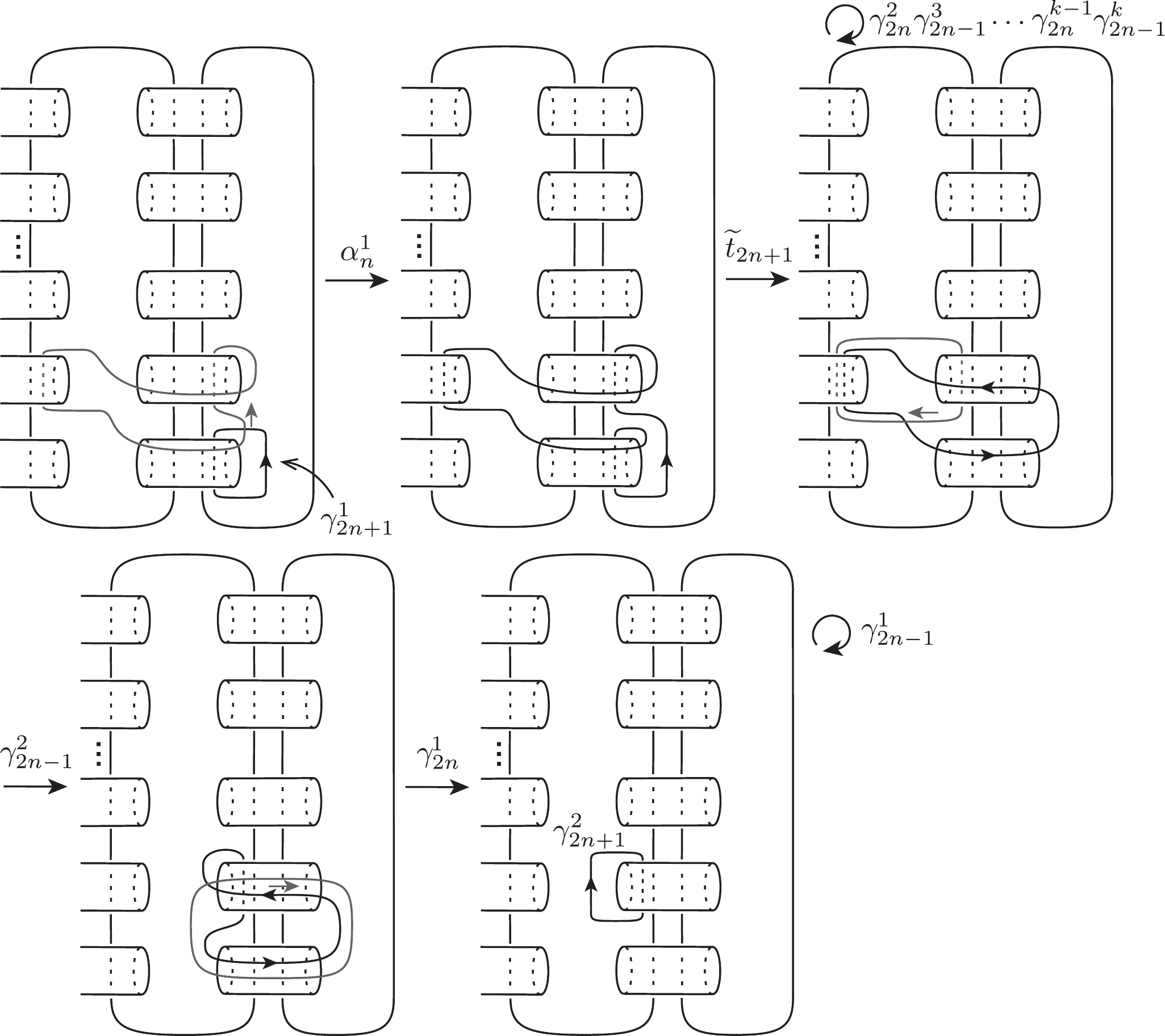}
\caption{The proof of $\widetilde{h}_{2n-1}\widetilde{t}_{2n+1}t_{\alpha _n^1}(\gamma _{2n+1}^1)=\gamma _{2n+1}^2$. We express the right-handed Dehn twist $t_\gamma $ along $\gamma $ by $\gamma $ in this figure. }\label{fig_proof_zeta_prod}
\end{figure}

\section{Examples of Lefschetz fibrations}\label{section_application}

Throughout this section, we assume that $g=n(k-1)$ for $n\geq 1$ and $k\geq 3$.  
In this section, we observe topological properties of the Lefschetz fibrations corresponding to the relation $(\widetilde{h}_1\widetilde{h}_3\cdots \widetilde{h}_{2n-1}\widetilde{t}_{2n+1}\widetilde{a}_n\cdots \widetilde{a}_2\widetilde{a}_1)^k=1$ in $\Mg $. 
First, we review the definition of the Lefschetz fibration and basic properties (for details, see \cite{Gompf-Stipsicz}). 

Let $X$ be a connected oriented closed smooth 4-manifold and $S^2$ an oriented 2-sphere. 
Then a smooth map $f\colon X\to S^2$ is a \textit{Lefschetz fibration} of genus $g$ if $f$ has finitely many critical points $x_1, \dots , x_m\in X$ with distinct images such that $f|_{X-\{ x_1, \dots , x_m\}}$ is $\Sigma _g$-bundle over $S^2-\{ f(x_1), \dots , f(x_m)\}$ and for each $1\leq i\leq m$, $f$ is locally expressed as $f(z_1, z_2)=z_1^2+z_2^2$ for some local complex coordinates around $x_i$ and $f(x_i)$ which are compatible with the orientations of $X$ and $S^2$.  
We take a base point $y\in S^2-\{ f(x_1), \dots , f(x_m)\}$ and identify the preimage $f^{-1}(y)$ with $\Sigma _g$ by some diffeomorphism. 
Each singular fiber $f^{-1}(f(x_i))$ is obtained by ``collapsing'' a single simple closed curve $c_i$ in the regular fiber $\Sigma _g$ to one point. 
We call 
the simple close curve $c_i$ in $\Sigma _g$ a \textit{vanishing cycle} of $f$. 

For a simple smooth loop $\gamma $ on $S^2-\{ f(x_1), \dots , f(x_m)\}$ based at $y$, the pull-back bundle $\gamma ^\ast f$ is obtained from $\Sigma _g\times [0,1]$ by gluing $\Sigma _g\times \{ 0\}$ and $\Sigma _g\times \{ 1\}$ by an orientation-preserving diffeomorphism $\varphi $ on $\Sigma _g$. 
Then we have the well-defined antihomomorphism $\Phi \colon \pi _1(S^2-\{ f(x_1), \dots , f(x_m)\} ,y)\to \Mg$ defined by $\Phi (\gamma )=[\varphi ]\in \Mg $ and call $\Phi $ the \textit{monodromy representation} of $f$. 
Let $\gamma _1, \gamma _2, \dots , \gamma _m$ be simple loops on $S^2$ based at $y$ such that $\gamma _i$ surrounds $f(x_i)$ anticlockwisely and $\gamma _1\gamma _2\cdots \gamma _m=1$ in $\pi _1(S^2-\{ f(x_1), \dots , f(x_m)\} ,y)$. 
As a well-known fact, we have $\Phi (\gamma _i)=t_{c_i}$ for $1\leq i\leq m$. 
Hence we have a positive relation  
\[
t_{c_m}\cdots t_{c_2}t_{c_1}=1 \text{ in }\Mg .
\]
Conversely, for a positive relator $t_{c_m}\cdots t_{c_2}t_{c_1}$ in $\Mg $, we can construct a Lefschetz fiblation over $S^2$ whose vanishing cycles are $c_1, c_2, \dots , c_m$. 
By Kas~\cite{Kas} and Matsumoto~\cite{Matsumoto}, if $g\geq 2$, the isomorphism class of Lefschetz fibration of genus $g$ is determined by a corresponding positive relator among Dehn twist up to \textit{simultaneous conjugations}
\[
t_{c_m}\cdots t_{c_2}t_{c_1}\sim t_{\varphi (c_m)}\cdots t_{\varphi (c_2)}t_{\varphi (c_1)}\text{ for }\varphi \in \Mg 
\]
and elementary transformations
\begin{align*}
&t_{c_m}\cdots t_{c_{i+2}}t_{c_{i+1}}t_{c_{i}}t_{c_{i-1}}\cdots t_{c_1}\sim t_{c_m}\cdots t_{c_{i+2}}t_{t_{c_{i+1}}(c_{i})}t_{c_{i+1}}t_{c_{i-1}}\cdots t_{c_1},\\
&t_{c_m}\cdots t_{c_{i+2}}t_{c_{i+1}}t_{c_{i}}t_{c_{i-1}}\cdots t_{c_1}\sim t_{c_m}\cdots t_{c_{i+2}}t_{c_{i}}t_{t_{c_{i}}^{-1}(c_{i+1})}t_{c_{i-1}}\cdots t_{c_1}.
\end{align*}

Recall that $\zeta =\widetilde{h}_1\widetilde{h}_3\cdots \widetilde{h}_{2n-1}\widetilde{t}_{2n+1}\widetilde{a}_n\cdots \widetilde{a}_2\widetilde{a}_1$ in $\Mg$ by Theorem~\ref{main_thm} and the product is expressed by right-handed Dehn twists.  
Since $\zeta ^k=1$, we have $(\widetilde{h}_1\widetilde{h}_3\cdots \widetilde{h}_{2n-1}\widetilde{t}_{2n+1}\widetilde{a}_n\cdots \widetilde{a}_2\widetilde{a}_1)^k=1$ in $\Mg $. 
Let $f_{g,k}\colon X_{g,k}\to S^2$ be the Lefschetz fibration of genus $g=n(k-1)$ corresponding to the positive relator $(\widetilde{h}_1\widetilde{h}_3\cdots \widetilde{h}_{2n-1}\widetilde{t}_{2n+1}\widetilde{a}_n\cdots \widetilde{a}_2\widetilde{a}_1)^k$. 

For a Lefschetz fibration $f\colon X\to S^2$ of genus $g$, we denote by $\chi (X)$ the Euler characteristic of $X$. 
Then, we can see that $\chi (X)=-4(g-1)+s$, where $s$ is the number of the singular fibers of $f$. 
Thus we have the following proposition. 

\begin{prop}\label{bsLF_Euler}
For $n\geq 1$ and $k\geq 3$ with $g=n(k-1)$, we have 
\[
\chi (X_{g,k})=(3n+1)k^2-6nk+4n+4.
\] 
\end{prop}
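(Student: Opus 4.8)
The plan is to compute the Euler characteristic from the formula $\chi(X_{g,k})=-4(g-1)+s$, so the entire task reduces to counting $s$, the number of singular fibers of $f_{g,k}$. Since $f_{g,k}$ corresponds to the positive relator $(\widetilde{h}_1\widetilde{h}_3\cdots \widetilde{h}_{2n-1}\widetilde{t}_{2n+1}\widetilde{a}_n\cdots \widetilde{a}_2\widetilde{a}_1)^k$, the number $s$ is exactly $k$ times the number of Dehn twist factors in the single word $\widetilde{h}_1\widetilde{h}_3\cdots \widetilde{h}_{2n-1}\widetilde{t}_{2n+1}\widetilde{a}_n\cdots \widetilde{a}_2\widetilde{a}_1$ (this word is the positive factorization of $\zeta$ from Theorem~\ref{main_thm}, and each occurrence of a $t_\gamma$ contributes one critical point, each repeated $k$ times in the $k$-fold product).

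So the next step is to count the Dehn twist factors in each piece. From the definitions in Section~\ref{section_proof_main-thm}: each $\widetilde{h}_{2i-1}=t_{\gamma _{2i-1}^1}t_{\gamma _{2i}^{1}}\cdots t_{\gamma _{2i}^{k-1}}t_{\gamma _{2i-1}^{k}}$ is a product of the curves $\gamma_{2i-1}^l$ for $1\le l\le k$ (that is $k$ of them) together with $\gamma_{2i}^l$ for $1\le l\le k-1$ (that is $k-1$ of them), giving $2k-1$ twists; there are $n$ such factors $\widetilde{h}_1,\widetilde{h}_3,\dots,\widetilde{h}_{2n-1}$. Each $\widetilde{a}_i=t_{\alpha_i^1}\cdots t_{\alpha_i^{k-1}}$ is a product of $k-1$ twists, and there are $n$ of them. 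Finally $\widetilde{t}_{2n+1}=t_{\gamma_{2n+1}^1}\cdots t_{\gamma_{2n+1}^{k}}$ has $k$ twists. Hence the number of twists in the single word is $n(2k-1)+n(k-1)+k=3nk-2n+k$, and therefore
\[
s=k(3nk-2n+k)=3nk^2-2nk+k^2.
\]

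It then remains to substitute into $\chi(X_{g,k})=-4(g-1)+s$ with $g=n(k-1)$. We get $-4(n(k-1)-1)+3nk^2-2nk+k^2 = -4nk+4n+4+3nk^2-2nk+k^2 = 3nk^2+k^2-6nk+4n+4 = (3n+1)k^2-6nk+4n+4$, which is the claimed value. The routine part is the arithmetic simplification; the only place that requires genuine care is the bookkeeping of how many distinct Dehn twist factors appear in $\widetilde{h}_{2i-1}$ (keeping track that the odd-indexed curves run over $k$ values $1,\dots,k$ while the even-indexed ones run over only $k-1$ values), so that is the step I would double-check against the explicit displayed formula for $\widetilde{h}_{2i-1}$. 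No deep input is needed beyond the formula $\chi(X)=-4(g-1)+s$ and the explicit relator, both already recorded above.
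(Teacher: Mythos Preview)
Your proof is correct and follows exactly the approach the paper takes: the paper simply records the formula $\chi(X)=-4(g-1)+s$ immediately before the proposition and leaves the count of singular fibers and the arithmetic as implicit. Your explicit tally of $2k-1$ twists in each $\widetilde{h}_{2i-1}$, $k-1$ in each $\widetilde{a}_i$, and $k$ in $\widetilde{t}_{2n+1}$, followed by the substitution $g=n(k-1)$, is precisely the computation the paper omits.
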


\subsection{Simply connectedness of $X_{g,k}$}

In this subsection, we prove that $X_{g,k}$ is simply connected. 
For a Lefschetz fibration $f\colon X\to S^2$ of genus $g$, a map $s\colon S^2\to X$ is a \textit{$(-1)$-section} of $f$ if $f\circ s=\mathrm{id}_{S^2}$ and self-intersection number of $[s(S^2)]$ in $H_2(X;\Z )$ is $-1$. 
Let $\Mgb $ be the group of isotopy classes of the self-diffeomorphisms on $\Sigma _g$ fixing a disk $D$ in $\Sigma _g$ pointwise. 
Then we have the natural surjective homomorphism $\mathcal{F}\colon \Mgb \to \Mg$ which is called the \textit{forgetful map}. 
As a well-known fact, for a Lefschetz fibration $f\colon X\to S^2$ corresponding to a positive relator $t_{c_m}\cdots t_{c_2}t_{c_1}$ in $\Mg $, if the relation $t_{c_m}\cdots t_{c_2}t_{c_1}=1$ in $\Mg $ lifts to the relation $t_{c_m}\cdots t_{c_2}t_{c_1}=t_{\partial D}$ in $\Mgb $ with respect to $\mathcal{F}$, then $f$ has a $(-1)$-section. 
By using this fact, we have the following lemma. 

\begin{lem}\label{lem_section}
For $n\geq 1$ and $k\geq 3$ with $g=n(k-1)$, the Lefschetz fibration $f_{g,k}\colon X_{g,k}\to S^2$ has a $(-1)$-section. 
\end{lem}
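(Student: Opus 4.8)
The plan is to lift the positive relator $(\widetilde h_1\widetilde h_3\cdots\widetilde h_{2n-1}\widetilde t_{2n+1}\widetilde a_n\cdots\widetilde a_1)^k$ from $\Mg$ to $\Mgb$ for a suitable disk $D\subset\Sigma_g$ and to check that it becomes $t_{\partial D}$ there; by the fact recalled just before the lemma, such a lift furnishes a $(-1)$-section. I would take for $D$ a small round disk centred at the fixed point $\widetilde p_{2n+2}$ of $\zeta$, and set $D_0=p(D)$, a disk in $\Sigma_0$ about the branch point $p_{2n+2}$. The curves $\gamma_{i,i+1}$ for $1\le i\le 2n+1$ and the arcs $\alpha_i$ for $1\le i\le n$ can all be drawn disjoint from $D_0$ (for $i\le 2n$ this is clear, and $\gamma_{2n+1,2n+2}$ may be taken to enclose $D_0$), so every vanishing cycle $\gamma_i^l$, $\alpha_i^l$ — contained in $p^{-1}$ of these — is disjoint from $D$; see Figures~\ref{fig_scc_c_il} and \ref{fig_scc_a_il}. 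Hence $\widehat P:=\widetilde h_1\widetilde h_3\cdots\widetilde t_{2n+1}\widetilde a_n\cdots\widetilde a_1$ defines an element of $\Mgb$ with $\mathcal F(\widehat P)=\zeta$, and $h_{2i-1},a_i,t_{2n+1}$ have support disjoint from $D_0$.

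I would then pass to the bordered surfaces $\Sigma_g^1=\Sigma_g\setminus\operatorname{int}D$ and $\Sigma_0^1=\Sigma_0\setminus\operatorname{int}D_0$, over which $p$ restricts to the connected unbranched cyclic $k$-fold covering $\Sigma_g^1\to\Sigma_0^1$, and invoke the Birman--Hilden correspondence for it (the bordered analogue of Lemma~\ref{lem_BH}, as developed in \cite{Hirose-Omori}): the natural homomorphism from $\SMb$ to $\LMb=\Mb$ is now \emph{injective}, since a lift of the identity fixing $\partial\Sigma_g^1$ pointwise must be trivial. As $\widetilde h_{2i-1},\widetilde a_i,\widetilde t_{2n+1}$ are lifts of $h_{2i-1},a_i,t_{2n+1}$ (Lemmas~6.3, 6.4, 6.6 of \cite{Hirose-Omori}) supported over $\Sigma_0^1$, the class $\widehat P$ is the lift of $\Pi:=h_1h_3\cdots h_{2n-1}t_{2n+1}a_n\cdots a_1\in\Mb$, and $(\widehat P)^k$ is the lift of $\Pi^k$.

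It remains to identify $\Pi$. By Proposition~\ref{prop_LM_rel}, $\Pi$ lies in the kernel of the capping homomorphism $\Mb\to\M$, which is the infinite cyclic group generated by $t_{\partial D_0}$; so $\Pi=t_{\partial D_0}^{\,c}$ for some $c\in\Z$. To compute $c$ I would use the abelianization $\Mb\cong B_{2n+1}\to\Z$ sending every half-twist $\sigma_i$ to $1$. Using $h_i=\sigma_i\sigma_{i+1}\sigma_i$ and $a_i=\sigma_{2i}\sigma_{2i-1}\sigma_{2i}^{-1}$, and noting that in $\Sigma_0^1$ the curve $\gamma_{2n+1,2n+2}$ bounds a neighbourhood of $\{p_1,\dots,p_{2n}\}$ so that $t_{2n+1}\mapsto 2n(2n-1)$ while $t_{\partial D_0}\mapsto 2n(2n+1)$, the image of $\Pi$ is
\[
3n+2n(2n-1)+n=2n(2n+1),
\]
whence $c=1$, i.e.\ $\Pi=t_{\partial D_0}$. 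Finally, a right-handed twist $t_{\partial D}$ along $\partial\Sigma_g^1$ is a lift of $t_{\partial D_0}^{\,k}$ through the degree-$k$ covering — a direct computation in a collar of the boundary, where passing to the cover multiplies the twisting angle by $k$, the sign being dictated by $\zeta$ being the $(-\tfrac{2\pi}{k})$-rotation. Thus $(\widehat P)^k$ and $t_{\partial D}$ are both in $\SMb$ and both lift $\Pi^k=t_{\partial D_0}^{\,k}$, so by injectivity $(\widehat P)^k=t_{\partial D}$ in $\Mgb$; this lift of the positive relator produces the desired $(-1)$-section.

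The step I expect to be the main obstacle is pinning the boundary-twist exponent to be exactly $1$: placing $D$ and checking disjointness is routine, and the bordered Birman--Hilden correspondence is essentially a citation, but it is the abelianization count in $B_{2n+1}$ — together with fixing the orientation conventions for the half-rotations $h_i$, the half-twists $a_i$, the Dehn twist $t_{2n+1}$, and the rotation direction of $\zeta$ — that actually forces the section to have self-intersection $-1$.
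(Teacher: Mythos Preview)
Your argument is correct and reaches the same conclusion as the paper, but by a genuinely different route. The paper's proof is direct and geometric: it introduces the explicit diffeomorphism $\zeta'$ obtained by performing the $(-\tfrac{2\pi}{k})$-rotation on $\Sigma_g\setminus D$ while holding $D$ fixed, observes immediately from this description that $(\zeta')^k=t_{\partial D}$ in $\Mgb$, and then verifies $\widehat P=\zeta'$ in $\Mgb$ by tracking the image of an arc near $\widetilde p_{2n+2}$, reusing the curve-chasing of Figure~\ref{fig_proof_zeta_prod}. Your approach instead pushes the computation down to $\Mb\cong B_{2n+1}$: you place $\Pi$ in $\ker(\Mb\to\M)=\langle t_{\partial D_0}\rangle$ via Proposition~\ref{prop_LM_rel}, pin down the exponent by the abelianization count $3n+2n(2n-1)+n=2n(2n+1)$, and then use injectivity of the bordered Birman--Hilden map together with $\theta(t_{\partial D})=t_{\partial D_0}^{\,k}$ to conclude $\widehat P^{\,k}=t_{\partial D}$. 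The payoff of your method is that no new curve-tracking on $\Sigma_g$ is needed---everything reduces to a single integer identity in $B_{2n+1}^{\mathrm{ab}}\cong\Z$---at the cost of invoking the bordered Birman--Hilden correspondence and the collar computation for the boundary twist; the paper's method is more self-contained but asks the reader to rerun the picture argument of the main theorem.

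One small correction: the parenthetical ``$\LMb=\Mb$'' is not right. Liftability for the balanced superelliptic cover still imposes the parity condition of Lemma~\ref{lem_GW} on permutations of $p_1,\dots,p_{2n+1}$ (with $p_{2n+2}$ fixed, only parity-preserving permutations can occur), so $\LMb\subsetneq\Mb$. This slip is harmless for your proof, however, since the abelianization count takes place in $\Mb$ and only uses that $\Pi\in\LMb\subset\Mb$; injectivity of $\SMb\to\LMb$ is all you actually invoke.
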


\begin{proof}
We regard the disk $D$ as a small disk neighborhood of $\widetilde{p}_{2n+2}\in \Sigma _g$ which is preserved by $\zeta $ (for the definition of $\widetilde{p}_{2n+2}$, see Section~\ref{section_bscov}). 
Let $\zeta ^\prime $ be a self-diffeomorphism on $\Sigma _g$ which is described as a result of a $(-\frac{2\pi }{k})$-rotation of $\Sigma _g-D$ fixing the disk $D$ pointwise as in Figure~\ref{fig_lift_t_partial-d}. 
By the definition, we have $\mathcal{F}(\zeta ^\prime)=\zeta \in \Mg $ and ${\zeta ^\prime }^k=t_{\partial D}\in \Mgb $. 
The mapping classes $\widetilde{h}_{2i-1}$, $\widetilde{a}_i$, and $\widetilde{t}_{2n+1}$ are naturally regarded as elements in $\Mgb $ and we can show that $\zeta ^\prime =\widetilde{h}_1\widetilde{h}_3\cdots \widetilde{h}_{2n-1}\widetilde{t}_{2n+1}\widetilde{a}_n\cdots \widetilde{a}_2\widetilde{a}_1$ in $\Mgb $ by a similar argument as in Figure~\ref{fig_proof_zeta_prod} (see the action of the product on the arc $(\widetilde{l}_{2n+1}^1\cup \widetilde{l}_{2n+1}^2)-\mathrm{int}D$). 
Thus we have 
\[
(\widetilde{h}_1\widetilde{h}_3\cdots \widetilde{h}_{2n-1}\widetilde{t}_{2n+1}\widetilde{a}_n\cdots \widetilde{a}_2\widetilde{a}_1)^k=(\zeta ^\prime )^k=t_{\partial D}\in \Mgb 
\]
and the Lefschetz fibration $f_{g,k}$ has a $(-1)$-section. 
We have completed the proof of Lemma~\ref{lem_section}. 
\end{proof}

For a Lefschetz fibration with a section, the fundamental group of the total space is calculated by the following lemma (precisely see \cite{Gompf-Stipsicz}). 

\begin{figure}[h]
\includegraphics[scale=1.3]{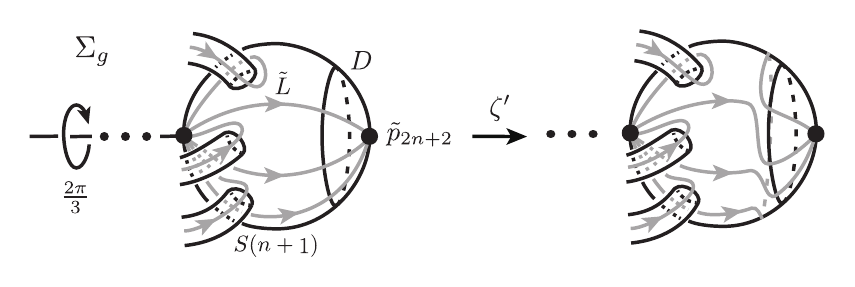}
\caption{The diffeomorphism $\zeta ^\prime $ on $\Sigma _g$ for $k=3$.}\label{fig_lift_t_partial-d}
\end{figure}

\begin{lem}\label{LF_pi1}
Let $f\colon X\to S^2$ be a Lefschetz fibration of genus $g$ corresponding to a positive relator $t_{c_m}\cdots t_{c_2}t_{c_1}$ in $\Mg $. 
Then, if $f$ has a section, then $\pi _1(X)$ is isomorphic to the quotient of $\pi _1(\Sigma _g)$ by the normal closure of $\{ c_1, c_2, \dots ,c_m\}$ in $\pi _1(X)$. 
\end{lem}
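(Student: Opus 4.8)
The plan is to cut $X$ along two fibers' worth of the base, compute $\pi_1$ of the two resulting pieces, and glue with the Seifert--van Kampen theorem; the section enters only at one step. Write $S^2=D_1\cup D_2$ with $D_1$ a closed disk whose interior contains all the critical values $f(x_1),\dots,f(x_m)$ and $D_2$ the complementary disk, and put $X_i=f^{-1}(D_i)$, so that $X=X_1\cup_A X_2$ with $A=X_1\cap X_2=f^{-1}(\partial D_1)$.

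First I would identify the fundamental groups of the pieces. Over $D_2$ there is no critical value, so $X_2\cong\Sigma_g\times D^2$ and $\pi_1(X_2)\cong\pi_1(\Sigma_g)$. Over $D_1$ the standard handle description of a Lefschetz fibration over a disk (see \cite{Gompf-Stipsicz}) exhibits $X_1$ as $\Sigma_g\times D^2$ with one $2$-handle attached along a suitably positioned copy of each vanishing cycle $c_i$ (lying on a fiber near $\partial D_1$), together with handles of index $\geq 3$ that do not change $\pi_1$; since attaching a $2$-handle along a loop kills its homotopy class, $\pi_1(X_1)\cong\pi_1(\Sigma_g)/\langle\langle c_1,\dots,c_m\rangle\rangle$, the quotient by the normal closure of the $c_i$. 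Because the total monodromy $t_{c_m}\cdots t_{c_1}$ equals $1$ in $\Mg$, it is isotopic to the identity, so $A\cong\Sigma_g\times S^1$; taking as basepoint the point $q$ of the fiber over $y\in\partial D_1$ through which the section passes, we get $\pi_1(A)\cong\pi_1(\Sigma_g)\times\langle z\rangle$ with $z$ the class of $\{q\}\times S^1$. The inclusion $A\hookrightarrow X_2$ is the product projection: it kills $z$, is the identity on $\pi_1(\Sigma_g)$, and in particular is surjective; the inclusion $A\hookrightarrow X_1$ is the quotient map on the $\pi_1(\Sigma_g)$-factor.

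The one place the hypothesis is used --- and the main point --- is showing that $z$ becomes trivial in $\pi_1(X_1)$. For this I would restrict the section $s\colon S^2\to X$ to $D_1$: the image $s(D_1)$ is an embedded disk in $X_1$ whose boundary is exactly the loop $z=\{q\}\times S^1$, so $z=1$ in $\pi_1(X_1)$. Then Seifert--van Kampen presents $\pi_1(X)$ as the pushout of $\pi_1(X_1)\leftarrow\pi_1(A)\to\pi_1(X_2)$; since $\pi_1(A)\to\pi_1(X_2)$ is onto with kernel $\langle z\rangle$ and the composite $\langle z\rangle\to\pi_1(A)\to\pi_1(X_1)$ is trivial, the pushout collapses to $\pi_1(X_1)$, giving $\pi_1(X)\cong\pi_1(\Sigma_g)/\langle\langle c_1,\dots,c_m\rangle\rangle$, as claimed.

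I expect the only delicate points to be bookkeeping rather than conceptual: pinning down the handle decomposition of $X_1$ (positions of the attaching circles and the irrelevance of the higher-index handles for $\pi_1$), using triviality of the relator to trivialize the $\Sigma_g$-bundle $A$ over $S^1$, and checking that the boundary of the section disk represents the generator $z$ and not a conjugate of it or a product with fiber classes --- which is precisely why it is convenient to place the basepoint on the section.
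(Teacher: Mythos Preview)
Your argument is correct and is the standard Seifert--van~Kampen proof; the paper itself gives no proof of this lemma, simply citing \cite{Gompf-Stipsicz}, and what you have written is essentially the argument one finds there. One small inaccuracy: a Lefschetz fibration over a disk has no handles of index $\geq 3$---the handlebody is exactly $\Sigma_g\times D^2$ with one $2$-handle per vanishing cycle---so that clause is unnecessary (though harmless). Your identification of where the section hypothesis enters, namely to kill the class $z$ of the circle factor in $\pi_1(X_1)$ via the disk $s(D_1)$, is exactly the point.
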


By using Lemma~\ref{LF_pi1}, we have the following proposition. 

\begin{prop}\label{bsLF_pi1}
$\pi _1(X_{g,k})=\{ 1\}$, namely, $X_{g,k}$ is simply connected. 
\end{prop}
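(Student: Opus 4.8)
The plan is to apply Lemma~\ref{LF_pi1} to the Lefschetz fibration $f_{g,k}$, which is legitimate because Lemma~\ref{lem_section} guarantees $f_{g,k}$ has a $(-1)$-section, hence a section. Thus $\pi_1(X_{g,k})$ is the quotient of $\pi_1(\Sigma_g)$ by the normal closure of the set of all vanishing cycles, which here are the curves $\gamma_i^l$ and $\alpha_i^l$ appearing (with multiplicity $k$) in the positive relator $(\widetilde{h}_1\widetilde{h}_3\cdots\widetilde{h}_{2n-1}\widetilde{t}_{2n+1}\widetilde{a}_n\cdots\widetilde{a}_1)^k$. So the whole problem reduces to showing that the classes $[\gamma_i^l]$ for $1\leq i\leq 2n+1$, $1\leq l\leq k$, together with $[\alpha_i^l]$ for $1\leq i\leq n$, $1\leq l\leq k-1$, normally generate $\pi_1(\Sigma_g)$.

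First I would fix an explicit standard generating set for $\pi_1(\Sigma_g)$ adapted to the picture of $\Sigma_g$ as $n+1$ spheres joined by $n$ blocks of $k$ annuli (Figure~\ref{fig_bs_periodic_map}): the genus $g=n(k-1)$ decomposes as $n$ blocks each contributing $k-1$ ``handle'' generators, coming from the $k$ annuli joining consecutive spheres (a bouquet of $k$ loops in a sphere-minus-disks has rank $k-1$ in homology, matching). Concretely, for each $1\leq i\leq n$ the curves $\gamma_{2i-1}^l$ (or $\gamma_{2i}^l$) and $\alpha_i^l$ should be readable directly off the figures as loops going through the $l$-th annulus in the $i$-th block, and one can choose $\pi_1$-generators $a_i^{(l)}, b_i^{(l)}$ ($1\leq l\leq k-1$) so that each $[\gamma_i^l]$ and $[\alpha_i^l]$ is either one of these generators or a short explicit word in them. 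The key geometric input is the relation $\zeta(\gamma_i^l)=\gamma_i^{l+1}$ and $\zeta(\alpha_i^l)=\alpha_i^{l+1}$: since $\zeta$ acts on $\Sigma_g$, it induces an automorphism of $\pi_1(\Sigma_g)$ (up to basepoint/conjugacy), and the full set of vanishing cycles in each block is a single $\zeta$-orbit together with $\zeta$-orbits of the $\alpha$'s, so it is automatically $\zeta$-invariant; this is exactly the feature that lets a relatively small-looking collection of curves fill up all of $\pi_1$.

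The main step, then, is a direct verification: take the normal closure $N$ of $\{[\gamma_i^l]\}\cup\{[\alpha_i^l]\}$ and show each chosen generator of $\pi_1(\Sigma_g)$ lies in $N$. I expect this to go block by block — within the $i$-th block, the curves $\gamma_{2i-1}^1,\dots,\gamma_{2i-1}^k$ (say) already give $k$ loops whose homology classes span a rank-$(k-1)$ space, so modulo $N$ the ``$b$-side'' generators of the block die; then using $\alpha_i^1,\dots,\alpha_i^{k-1}$ (and possibly $\gamma_{2i}^l$), which cross the block the ``other way,'' one kills the ``$a$-side'' generators. One should also check the contributions of the two end spheres $S(1)$ and $S(n+1)$, each carrying $k$ disks, and the role of $\widetilde{t}_{2n+1}$/the curves $\gamma_{2n+1}^l$ near $\widetilde{p}_{2n+1},\widetilde{p}_{2n+2}$; since those end spheres are planar, they contribute no genus, so no extra generators arise there. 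The hard part will be this bookkeeping: getting the identification of $[\gamma_i^l]$ and $[\alpha_i^l]$ with explicit words in a chosen $\pi_1$-basis exactly right from the figures, and then confirming that the resulting $2n+1$-many $\gamma$-families plus $n$-many $\alpha$-families really do normally generate — i.e., that the relations are not ``over-redundant'' in a way that leaves some generator alive. Once the generators-of-$\pi_1$ claim is in hand, Lemma~\ref{LF_pi1} immediately gives $\pi_1(X_{g,k})=\{1\}$, proving Proposition~\ref{bsLF_pi1}.
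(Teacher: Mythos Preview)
Your overall strategy matches the paper's: invoke Lemma~\ref{lem_section} to get a section, apply Lemma~\ref{LF_pi1}, and then check that the vanishing cycles normally generate $\pi_1(\Sigma_g)$ by choosing an explicit generating set $\{c_{i,l}\}_{1\le i\le 2n,\,1\le l\le k-1}$ adapted to the block structure and showing each $c_{i,l}$ lies in the normal closure $N$.

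Two remarks on where your sketch diverges from, or is looser than, what the paper actually does. First, your heuristic ``the $\gamma_{2i-1}^l$ give $k$ loops whose homology classes span a rank-$(k-1)$ space, so modulo $N$ the $b$-side generators die'' is not a valid step: spanning $H_1$ only forces $\pi_1(\Sigma_g)/N$ to be perfect, not trivial, so one really must work in $\pi_1$ rather than $H_1$. The paper handles this by writing down, for $2\le l\le k-2$, auxiliary based loops $\delta_{i,l}$ (explicit words in the $c_{j,l-1}$ and $c_{2i-1,l}$) such that $\gamma_1^l=\delta_{1,l}$ and $\gamma_{2i-1}^l=\delta_{i-1,l}\delta_{i,l}^{-1}$ in $\pi_1$; this yields an induction first on $l$ and then on $i$ that puts every $c_{2i-1,l}$ into $N$. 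The easy cases $c_{2i,l}=\gamma_{2i}^l$, $c_{2i-1,1}=\gamma_{2i-1}^1$, $c_{2i-1,k-1}=\gamma_{2i-1}^k$ are read off directly. Second, the paper's argument never actually uses the $\alpha_i^l$ curves: the $\gamma_i^l$ alone already normally generate, so your plan to use the $\alpha$'s to kill the ``$a$-side'' is unnecessary (though not wrong). The $\zeta$-equivariance you highlight is indeed what makes the family rich enough, but the paper exploits it only implicitly through the explicit $\pi_1$ identities rather than as an organizing principle.
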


\begin{proof}
First, we identify $\Sigma _g$ with the surface as on the right-hand side in Figure~\ref{fig_surface-homeo2} by a natural diffeomorphism. 
We take a base point $y\in \Sigma _g$ and generators $c_{i,l}$ for $1\leq i\leq 2n$ and $1\leq l\leq k-1$ of $\pi _1(\Sigma _g, y)$ as in Figure~\ref{fig_pi1_gen}. 
Let $N$ be the normal closure of $\{ \gamma _{2i-1}^l\mid 1\leq i\leq n+1,\ 1\leq l\leq k\} \cup \{ \gamma _{2i}^l\mid 1\leq i\leq n,\ 1\leq l\leq k-1\} \cup \{ \alpha _{i}^l\mid 1\leq i\leq n,\ 1\leq l\leq k-1\}$ in $\pi _1(\Sigma _g, y)$. 
By Lemma~\ref{LF_pi1}, $\pi _1(X_{g,k})$ is isomorphic to $\pi _1(\Sigma _g, y)/N$. 
Hence, it is enough for completing the proof of Proposition~\ref{bsLF_pi1} to prove that $c_{i,l}\in N$ for $1\leq i\leq 2n$ and $1\leq l\leq k-1$. 

As elements in $\pi _1(\Sigma _g, y)$, we have $\gamma _{2i}^l=c_{2i,l}$, $\gamma _{2i-1}^1=c_{2i-1,1}$, and $\gamma _{2i-1}^{k}=c_{2i-1,k-1}$ for $1\leq i\leq n$ and $1\leq l\leq k-1$. 
Hence these elements lie in $N$. 
Denote $\delta _{1,l}=c_{1,l}^{-1}(c_{2n,l-1}\cdots c_{4,l-1}c_{2,l-1})c_{1,l-1}(c_{2,l-1}^{-1}c_{4,l-1}^{-1}\cdots c_{2n,l-1}^{-1})$ for $2\leq l\leq k-2$ and 
\[
\delta _{i,l}=c_{2i-1,l}^{-1}\delta _{i-1,l}(c_{2n,l-1}\cdots c_{2i+2,l-1}c_{2i,l-1})c_{2i-1,l-1}(c_{2i,l-1}^{-1}c_{2i+2,l-1}^{-1}\cdots c_{2n,l-1}^{-1})
\]
for $2\leq i\leq n$ and $2\leq l\leq k-2$. 
Remark that $\delta _{i,l}$ for $1\leq i\leq n$ and $2\leq l\leq k-2$ is represented by an oriented simple loop on $\Sigma _g$ based at $y$ as in Figure~\ref{fig_loop_delta}. 
We can see that $\gamma _{1,l}=\delta _{1,l}$ and $\gamma _{2i-1,l}=\delta _{i-1,l}\delta _{i,l}^{-1}$ for $2\leq i\leq n$ and $2\leq l\leq k-2$. 
Thus, we show that 
\[
c_{1,2},\ c_{1,3}, \dots ,\ c_{1,k-2},\ c_{3,2},\ c_{3,3}, \dots ,\ c_{3,k-2}, \dots ,\ c_{2n-1,2},\ c_{2n-1,3}, \dots ,\ c_{2n-1,k-2}\in N 
\]
inductively. 
Therefore, all $c_{i,l}$'s lie in $N$ and we have completed the proof of Proposition~\ref{bsLF_pi1}. 
\if0
Since we have 
\begin{enumerate}
\item $\gamma _{1}^l=c_{1,l}^{-1}(c_{2n,l-1}\cdots c_{4,l-1}c_{2,l-1})c_{1,l-1}(c_{2,l-1}^{-1}c_{4,l-1}^{-1}\cdots c_{2n,l-1}^{-1})$ for $2\leq i\leq k-2$,  
\item $\gamma _{3}^l=\gamma _{1}^l\left( (c_{2n,l-1}\cdots c_{6,l-1}c_{4,l-1})c_{3,l-1}^{-1}(c_{4,l-1}^{-1}c_{6,l-1}^{-1}\cdots c_{2n,l-1}^{-1})(\gamma _{1}^l)^{-1}c_{3,l}\right) $ for $2\leq i\leq k-2$, 
\item $\gamma _{5}^l=\left( c_{3,l}^{-1}\gamma _{1}^l(c_{2n,l-1}\cdots c_{6,l-1}c_{4,l-1})c_{3,l-1}(c_{4,l-1}^{-1}c_{6,l-1}^{-1}\cdots c_{2n,l-1}^{-1})\right) $\\ $\cdot  (c_{2n,l-1}\cdots c_{8,l-1}c_{6,l-1})c_{5,l-1}^{-1}(c_{6,l-1}^{-1}c_{8,l-1}^{-1}\cdots c_{2n,l-1}^{-1})$\\ $\cdot (c_{2n,l-1}\cdots c_{6,l-1}c_{4,l-1})c_{3,l-1}^{-1}(c_{4,l-1}^{-1}c_{6,l-1}^{-1}\cdots c_{2n,l-1}^{-1})(\gamma _{1}^l)^{-1}c_{3,l}\cdot c_{5,l}$ for $2\leq i\leq k-2$, 
\end{enumerate}
in $\pi _1(\Sigma _g, y)$, $c_{1,l}\in N$ for $2\leq i\leq k-2$. 
\fi
\end{proof}

\begin{figure}[h]
\includegraphics[scale=0.60]{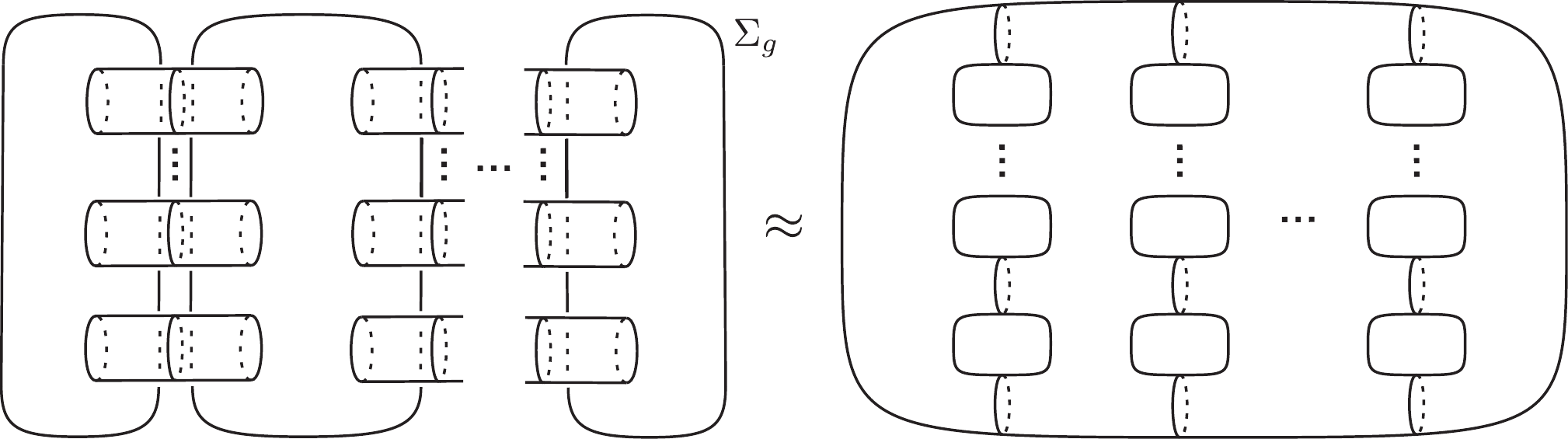}
\caption{A natural diffeomorphism of $\Sigma _g$.}\label{fig_surface-homeo2}
\end{figure}

\begin{figure}[h]
\includegraphics[scale=0.9]{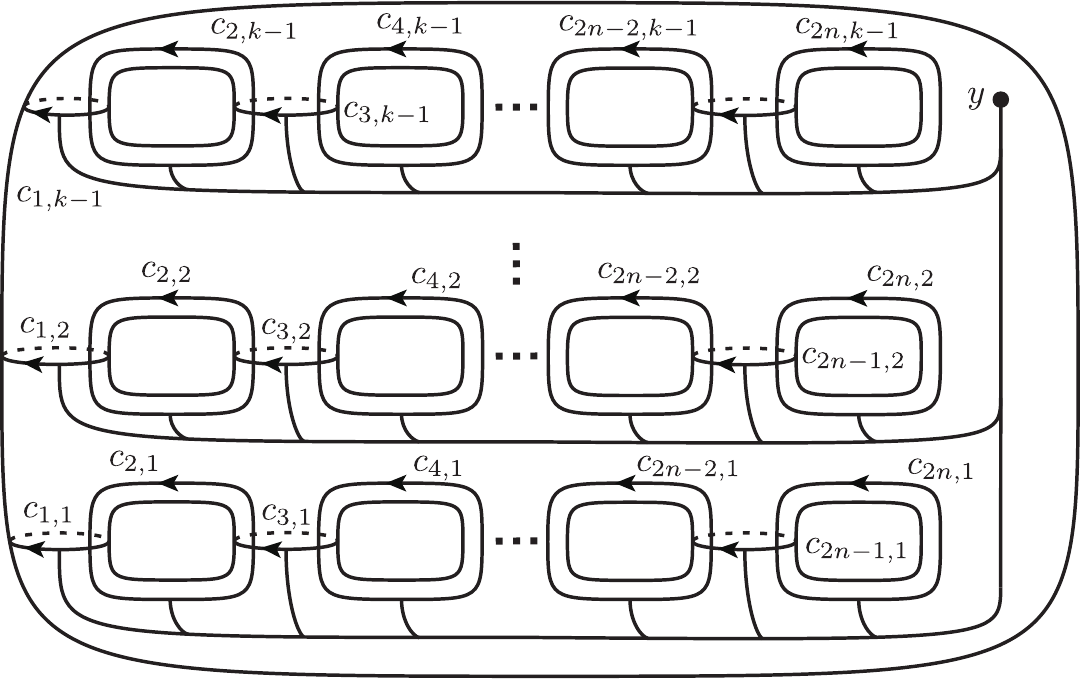}
\caption{A loop $c_{i,l}$ for $1\leq i\leq 2n$ and $1\leq l\leq k-1$ on $\Sigma _g$ based at $y$.}\label{fig_pi1_gen}
\end{figure}

\begin{figure}[h]
\includegraphics[scale=0.75]{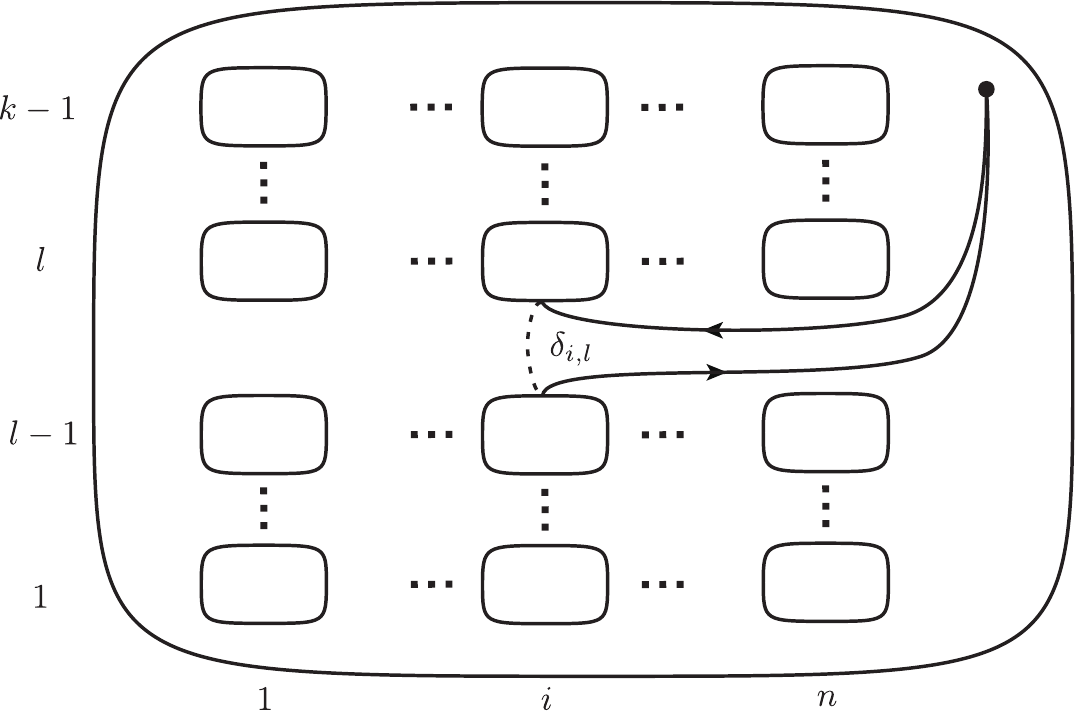}
\caption{A loop $\delta _{i,l}$ for $1\leq i\leq n$ and $2\leq l\leq k-2$ on $\Sigma _g$ based at $y$.}\label{fig_loop_delta}
\end{figure}

\subsection{Spinness of $X_{g,k}$}

In this subsection, we prove the following proposition. 

\begin{prop}\label{prop_nonspin}
For $n\geq 1$ and $k\geq 3$ with $g=n(k-1)$, $X_{g,k}$ has no spin structure. 
\end{prop}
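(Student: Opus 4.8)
The plan is to deduce Proposition~\ref{prop_nonspin} directly from the $(-1)$-section produced in Lemma~\ref{lem_section}, using the classical fact that the intersection form of a closed spin $4$-manifold is even. Recall why this holds: if $X$ is a closed oriented $4$-manifold with $w_2(X)=0$, then Wu's formula gives $x\cdot x=\langle w_2(X),x\rangle=0$ in $\Z/2$ for every $x\in H^2(X;\Z/2)$, so the integral intersection form $Q_X$ is even; equivalently, a closed oriented $4$-manifold containing an embedded sphere of odd self-intersection number admits no spin structure. This is the only input beyond what the paper has already established.

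First I would invoke Lemma~\ref{lem_section}: the Lefschetz fibration $f_{g,k}\colon X_{g,k}\to S^2$ has a $(-1)$-section $s\colon S^2\to X_{g,k}$. Its image $s(S^2)$ is an embedded $2$-sphere, and by the definition of a $(-1)$-section $[s(S^2)]\cdot[s(S^2)]=-1$, a class of odd square (it is in particular nonzero in $H_2(X_{g,k};\Z)$, since it pairs to $1$ with the fiber class). Hence $Q_{X_{g,k}}$ is not even, so $X_{g,k}$ carries no spin structure. Since $X_{g,k}$ is moreover simply connected by Proposition~\ref{bsLF_pi1}, ``odd intersection form'' and ``non-spin'' are in fact equivalent here, though only the easy implication is needed.

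An alternative, more hands-on route --- closer to the style of the rest of the paper --- uses Stipsicz's spinness criterion for a Lefschetz fibration admitting a section: $X_{g,k}$ is spin if and only if the mod $2$ intersection pairing on $H_1(\Sigma_g;\Z/2)$ admits a quadratic refinement $q$ with $q(c)=1$ for every vanishing cycle $c$. Here the vanishing cycles are the curves $\gamma_i^l$ and $\alpha_i^l$ of Figures~\ref{fig_scc_c_il} and \ref{fig_scc_a_il} (each occurring $k$ times in the relator $(\widetilde{h}_1\widetilde{h}_3\cdots\widetilde{h}_{2n-1}\widetilde{t}_{2n+1}\widetilde{a}_n\cdots\widetilde{a}_2\widetilde{a}_1)^k$), and one would rule out such a $q$ by analyzing their classes in $H_1(\Sigma_g;\Z/2)$: for instance, exhibit a separating vanishing cycle, on which every $q$ vanishes, or a mod $2$ relation among three of the cycles that forces the three required values of $q$ to be inconsistent. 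On the first route there is essentially no obstacle, since the substantive content sits in Lemma~\ref{lem_section}; on the second route the only real difficulty is the bookkeeping of the homology classes of the $\gamma_i^l$ and $\alpha_i^l$ in $H_1(\Sigma_g;\Z/2)$.
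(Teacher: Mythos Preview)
Your first route is correct and is genuinely shorter than the paper's proof. The $(-1)$-section of Lemma~\ref{lem_section} gives a class of odd square in $H_2(X_{g,k};\Z)$, and the Wu formula (for a closed oriented $4$-manifold, $w_2$ is characteristic for the mod~$2$ intersection form) forces $w_2(X_{g,k})\neq 0$, so $X_{g,k}$ is not spin; simple connectedness is not needed. The paper instead works through Stipsicz's criteria (Propositions~\ref{prop_Stipsicz1} and~\ref{prop_Stipsicz2}) in two cases: when $k$ is odd or $n$ is even (Lemma~\ref{lem_nonspin1}) it rules out the required quadratic form directly via $\Z_2$-homology relations among the $\gamma_i^l$; when $k$ is even and $n$ is odd (Lemma~\ref{lem_nonspin2}) it first shows that such a $q$ \emph{does} exist, so $X_{g,k}-\nu(\Sigma_g)$ \emph{is} spin, and only then invokes the $(-1)$-section to violate the dual-class condition of Proposition~\ref{prop_Stipsicz2}. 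Your argument isolates the decisive obstruction (the odd-square sphere) and applies it uniformly, bypassing the case split; the paper's approach, while longer, has the byproduct of determining exactly when the complement of a fiber is spin.

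One correction to your third paragraph: the statement ``$X_{g,k}$ is spin if and only if there exists a quadratic refinement $q$ with $q(c)=1$ on every vanishing cycle'' is Stipsicz's criterion for fibrations over $D^2$ (Proposition~\ref{prop_Stipsicz1}), not over $S^2$; over $S^2$ one needs in addition that some dual of the fiber class have even square (Proposition~\ref{prop_Stipsicz2}). In particular, the plan to ``rule out such a $q$'' cannot succeed in every case: the paper's Lemma~\ref{lem_nonspin2} exhibits such a $q$ when $k$ is even and $n$ is odd. So the alternative route as you state it would only handle the parity cases of Lemma~\ref{lem_nonspin1}; to finish it one still needs the section argument --- which is precisely your first route.
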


To prove this proposition, we use a necessary and sufficient condition for the spinness of a Lefschetz fibration over $S^2$ or a 2-disk $D^2$ by Stipsicz~\cite{Stipsicz}. 
A map $q\colon H_{1}(\Sigma _g; \Z _2)\to \Z _2$ is a \emph{quadratic form} if $q(c+c^\prime )=q(c)+q(c^\prime )+c\cdot c^\prime $ for any $c,\ c^\prime \in H_{1}(\Sigma _g; \Z _2)$, where $\cdot $ is the intersection form. 
For the case of Lefschetz fibrations over $D^2$, we have the following proposition. 

\begin{prop}[\cite{Stipsicz}]\label{prop_Stipsicz1}
Let $f\colon X\to D^2$ be a Lefschetz fibration of genus $g$ with a regular fiber $\Sigma _g=f^{-1}(y)$ and vanishing cycles $c_1, c_2, \dots , c_m\subset \Sigma _g$. 
Then $X$ admits a spin structure if and only if there exists a quadratic form $q \colon H_1(\Sigma _g;\Z _2)\to \Z _2$ such that $q(c_i)=1\in \Z _2=\{ 0,1\}$ for each $1\leq i\leq m$. 
\end{prop}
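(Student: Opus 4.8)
The plan is to compute $w_2(X)$ directly from the handle decomposition of the Lefschetz fibration and to reduce the spin condition to a framing compatibility along each vanishing cycle. First I would recall from \cite{Gompf-Stipsicz} that a Lefschetz fibration $f\colon X\to D^2$ with vanishing cycles $c_1,\dots ,c_m$ admits a handle decomposition
\[
X=(\Sigma _g\times D^2)\cup h_1\cup \cdots \cup h_m,
\]
where each $h_i$ is a $2$-handle attached along a copy of $c_i$ pushed into a distinct fiber over $\partial D^2$, with framing $-1$ relative to the framing that $c_i$ inherits from the fiber $\Sigma _g$. In particular this decomposition has no handles of index greater than $2$, so any spin structure defined on $\Sigma _g\times D^2$ and extended over every $h_i$ is automatically a spin structure on all of $X$.

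Next I would invoke the classical correspondence (Atiyah, Johnson) between spin structures on $\Sigma _g$ and quadratic forms $q\colon H_1(\Sigma _g;\Z _2)\to \Z _2$ refining the mod $2$ intersection form, under which $q(c)$ records whether the restriction of the spin structure to a simple closed curve $c$ is the non-bounding or the bounding spin structure of $S^1$. Since $\Sigma _g\times D^2$ deformation retracts onto the regular fiber $\Sigma _g=f^{-1}(y)$, every spin structure on $\Sigma _g\times D^2$ is the pull-back of a spin structure on $\Sigma _g$, hence is recorded by such a form $q$, and conversely every $q$ arises this way.

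The heart of the argument is the extension of a spin structure over a single $2$-handle. For a $2$-handle attached along a framed circle $K$ in the boundary of a spin $4$-manifold, the spin structure extends over the handle if and only if the parity of the attaching framing agrees with the parity of the spin structure restricted to $K$. Applying this to $h_i$, whose framing $-1$ is odd, I would show that the pull-back spin structure determined by $q$ extends over $h_i$ if and only if the restriction of the spin structure to $c_i$ is non-bounding, that is, $q(c_i)=1$. Running this over all $i$ yields the conclusion: $X$ carries a spin structure if and only if some quadratic form $q$ satisfies $q(c_i)=1$ for every $1\leq i\leq m$.

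The main obstacle I anticipate is pinning down the parity normalization in this last step, namely verifying that the Lefschetz framing $-1$ (rather than the surface framing) is precisely what converts the extension condition into $q(c_i)=1$, the value $1$ and not $0$. This requires a careful local computation in a neighborhood of each critical point, comparing the surface framing of $c_i$ with the $0$-framing used to read off the spin obstruction in $\partial (\Sigma _g\times D^2)=\Sigma _g\times S^1$. The guiding sanity check is the degenerate case of an inessential vanishing cycle: there $c$ bounds a disk in $\Sigma _g$, forcing $q(c)=0$, and the associated neighborhood of a $(-1)$-sphere is indeed never spin, in agreement with the requirement $q(c_i)=1$.
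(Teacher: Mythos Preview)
The paper does not prove this proposition: it is quoted as a result of Stipsicz~\cite{Stipsicz} and used as a black box in the proofs of Lemmas~\ref{lem_nonspin1} and~\ref{lem_nonspin2}. So there is no ``paper's own proof'' to compare against.

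That said, your outline is correct and is essentially Stipsicz's original argument. The handle decomposition $X=(\Sigma _g\times D^2)\cup h_1\cup \cdots \cup h_m$ with $(-1)$-framed $2$-handles along the vanishing cycles, the identification of spin structures on $\Sigma _g\times D^2$ with quadratic forms on $H_1(\Sigma _g;\Z _2)$ via the Atiyah--Johnson correspondence, and the handle-by-handle extension criterion are exactly the ingredients Stipsicz uses. Your sanity check with an inessential vanishing cycle is also the right way to pin down the normalization $q(c_i)=1$ rather than $0$; once that is settled the framing-parity computation is routine. The only caveat is that the ``main obstacle'' you flag is not really an obstacle but a bookkeeping exercise: the surface framing of $c_i\subset \Sigma _g\times \{pt\}$ and the product framing coming from $\Sigma _g\times S^1$ agree, so the Lefschetz $(-1)$-framing is genuinely odd relative to the reference used to read off $q$, and the conclusion $q(c_i)=1$ follows.
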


Proposition~\ref{prop_Stipsicz1} implies that a Lefschetz fibration with a separating vanishing cycle does not admit a spin structure. 
Let $f\colon X\to S^2$ be a Lefschetz fibration of genus $g$ with a regular fiber $\Sigma _g=f^{-1}(y)$. 
A homology class $\gamma \in H_2(X; \Z )$ is a \emph{dual} of $\Sigma _g$ if $[\Sigma _g]\cdot \gamma =1\in \Z$. 
We denote $\nu (\Sigma _g)=f^{-1}(U)$ for some small disk neighborhood $U$ of $y$ in $S^2$. 
Then the restriction $f|_{X-\nu (\Sigma _g)}\colon X-\nu (\Sigma _g)\to S^2-U\approx D^2$ is a Lefschetz fibration over $D^2$. 
For the case of Lefschetz fibrations over $S^2$, we have the following proposition. 

\begin{prop}[\cite{Stipsicz}]\label{prop_Stipsicz2}
Let $f\colon X\to S^2$ be a Lefschetz fibration of genus $g$ with a regular fiber $\Sigma _g=f^{-1}(y)$. 
Then $X$ admits a spin structure if and only if $X-\nu (\Sigma _g)$ is spin and $\gamma \cdot \gamma \equiv 0$ mod $2$ for some dual $\gamma \in H_2(X;\Z )$ of $\Sigma _g$. 
\end{prop}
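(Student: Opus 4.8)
The plan is to characterize spinness of the closed oriented $4$-manifold $X$ through the vanishing of $w_2(X)\in H^2(X;\Z _2)$ and to analyze this obstruction via the decomposition $X=X_0\cup _{\partial }\nu (\Sigma _g)$, where $X_0:=X-\nu (\Sigma _g)$ is the Lefschetz fibration over $D^2$ appearing in the statement and the two pieces are glued along $\partial X_0=\Sigma _g\times S^1$. A spin structure on $X$ is the same datum as a pair of spin structures on $X_0$ and on $\nu (\Sigma _g)$ whose restrictions to $\Sigma _g\times S^1$ agree, so I would reduce the claim to an extension-of-spin-structures problem across this splitting. The necessity of the first condition, that $X_0$ be spin, is then immediate (and its explicit criterion is governed by Proposition~\ref{prop_Stipsicz1}); the whole content lies in pinning down the single extra $\Z _2$-obstruction coming from the gluing and identifying it with the parity of $\gamma \cdot \gamma $.

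First I would record that $\nu (\Sigma _g)\cong \Sigma _g\times D^2$ is always spin, since its tangent bundle is $T\Sigma _g\oplus \epsilon ^2$ and an orientable surface carries a spin structure. Its spin structures are a torsor over $H^1(\Sigma _g\times D^2;\Z _2)\cong H^1(\Sigma _g;\Z _2)$, and the key point is that a spin structure on the boundary $\Sigma _g\times S^1$ extends over $\Sigma _g\times D^2$ exactly when its restriction to the $S^1$-factor is the bounding spin structure on the circle (the disk $D^2$ provides the nullbordism). On the other side, assuming $X_0$ is spin, I would fix a spin structure on $X_0$ and restrict it to $\Sigma _g\times S^1$. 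Matching the $\Sigma _g$-factor is unobstructed, because the $H^1(\Sigma _g;\Z _2)$ worth of spin structures on $\nu (\Sigma _g)$ surjects onto the spin structures of the $\Sigma _g$-factor; hence the only obstruction to gluing is whether the induced spin structure along a normal circle $\{ \mathrm{pt}\} \times S^1$ is bounding.

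The heart of the argument, and the step I expect to be the main obstacle, is identifying this $S^1$-obstruction with $\gamma \cdot \gamma \bmod 2$. I would represent a dual class $\gamma $ by a surface $\widehat{\gamma }$ meeting a regular fiber $F$ transversally in one point; then $\widehat{\gamma }\cap X_0$ is an annulus whose core is such a normal circle, and the self-intersection $\gamma \cdot \gamma $ is computed by the framing of the normal bundle of $\widehat{\gamma }$ relative to the product framing carried by $\nu (\Sigma _g)=\Sigma _g\times D^2$. A careful comparison of framings should show that the spin structure inherited by the normal circle from $X_0$ is the bounding one precisely when this relative framing is even, that is, when $\gamma \cdot \gamma \equiv 0\pmod 2$. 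Getting this correspondence exactly right is delicate: it requires tracking how a spin structure restricts to an embedded circle, translating ``bounding versus non-bounding'' into the parity of a framing, matching that framing with the intersection-theoretic self-intersection number, and accounting for the fact that varying the spin structure on $X_0$ changes the $S^1$-restriction only through the pairing with $[\{ \mathrm{pt}\} \times S^1]\in H_1(X_0;\Z _2)$, so that after exhausting the $\Sigma _g$-matching freedom the residual invariant is exactly $\gamma \cdot \gamma \bmod 2$.

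Finally I would check that the condition is well posed, i.e. that the parity of $\gamma \cdot \gamma $ does not depend on the chosen dual, justifying the quantifier ``for some dual.'' If $\gamma ,\gamma ^\prime $ are two duals then $\delta :=\gamma -\gamma ^\prime $ satisfies $[F]\cdot \delta =0$ and $\gamma \cdot \gamma \equiv \gamma ^\prime \cdot \gamma ^\prime +\delta \cdot \delta \pmod 2$; since such a $\delta $ is carried by $X_0$, the spinness of $X_0$ forces $\delta \cdot \delta $ to be even (a spin $4$-manifold has even intersection form), so the parity is independent of the choice. Assembling the three ingredients—$X_0$ spin, unobstructed matching on the $\Sigma _g$-factor, and the $S^1$-obstruction equal to $\gamma \cdot \gamma \bmod 2$—yields that $X$ admits a spin structure if and only if $X_0$ is spin and $\gamma \cdot \gamma \equiv 0\pmod 2$ for some (equivalently, any) dual $\gamma $, which is the assertion.
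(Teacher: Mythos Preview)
The paper does not supply its own proof of this proposition: it is quoted verbatim from Stipsicz~\cite{Stipsicz} and used as a black box in the proof of Lemma~\ref{lem_nonspin2}. There is therefore nothing in the present paper to compare your argument against.

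For what it is worth, your outline is essentially the argument Stipsicz gives. The decomposition $X=X_0\cup_{\Sigma_g\times S^1}\nu(\Sigma_g)$, the reduction to a single $\Z_2$-obstruction along the normal circle, and the identification of that obstruction with the parity of a self-intersection via a framing comparison are exactly the steps in~\cite{Stipsicz}. Your final paragraph on independence of the dual (using that any $\delta$ with $[\Sigma_g]\cdot\delta=0$ lies in the image of $H_2(X_0)$ and hence has even square when $X_0$ is spin) is also the argument the present paper invokes implicitly at the end of the proof of Lemma~\ref{lem_nonspin2}. The one place you flag as delicate---matching the bounding/non-bounding dichotomy on $\{\mathrm{pt}\}\times S^1$ with the parity of the relative framing---is genuinely the technical core; it requires either a direct framing computation or an appeal to Wu's formula to pin down $w_2(X)$ on $\gamma$, and your sketch would need to be fleshed out there to constitute a complete proof.
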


Let $\Sigma _g=f_{g,k}^{-1}(y)$ be a regular fiber of $f_{g,k} \colon X_{g,k} \to S^2$. 
Recall that the vanishing cycles of $f_{g,k}$ are  $\gamma _{2i-1}^l$ $(1\leq i\leq n+1,\ 1\leq l\leq k)$, $\gamma _{2i}^l$ $(1\leq i\leq n,\ 1\leq l\leq k-1)$, and $\alpha _{i}^l$ $(1\leq i\leq n,\ 1\leq l\leq k-1)$ (see also Figures~\ref{fig_scc_c_il} and \ref{fig_scc_a_il}). 
From here, we often express a vanishing cycle and its homology class by the same symbol. 
The next lemma is the case that $k\geq 3$ is odd or $n\geq 2$ is even for Proposition~\ref{prop_nonspin}. 

\begin{lem}\label{lem_nonspin1}
If $k\geq 3$ is odd or $n\geq 2$ is even, then $X_{g,k}$ has no spin structure. 
\end{lem}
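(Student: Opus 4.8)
The plan is to apply Stipsicz's spin obstruction. By Proposition~\ref{prop_Stipsicz2}, if the restriction of $f_{g,k}$ to $X_{g,k}-\nu (\Sigma _g)$, a Lefschetz fibration over $D^2$, is not spin, then $X_{g,k}$ is not spin; and by Proposition~\ref{prop_Stipsicz1} this restriction fails to be spin as soon as there is \emph{no} quadratic form $q\colon H_1(\Sigma _g;\Z _2)\to \Z _2$ with $q(c)=1$ for every vanishing cycle $c$ of $f_{g,k}$ (i.e.\ for all the $\gamma _{2i-1}^l$, $\gamma _{2i}^l$, $\alpha _i^l$). So I argue by contradiction, assuming such a $q$ exists. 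The only input I use is that, for vanishing cycles $c_{i_1},\dots ,c_{i_r}$ with $c_{i_1}+\dots +c_{i_r}=0$ in $H_1(\Sigma _g;\Z _2)$, iterating the identity $q(c+c')=q(c)+q(c')+c\cdot c'$ gives
\[
r\equiv \sum _{j=1}^{r}q(c_{i_j})=\sum _{1\leq j<j'\leq r}c_{i_j}\cdot c_{i_{j'}}\pmod 2 .
\]
Hence it suffices to exhibit a $\Z _2$-linear dependence among vanishing cycles for which the number $r$ of terms and the mod-$2$ double sum of their intersection numbers have opposite parity; under the hypothesis on $k$ or $n$, one of the two relations below does the job.

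First suppose $k$ is odd. The $k$ curves $\gamma _{2n+1}^1,\dots ,\gamma _{2n+1}^k$ occurring in $\widetilde{t}_{2n+1}$ are the components of the preimage under $p$ of the simple closed curve on $\Sigma _0$ surrounding the two branch points $p_{2n+1},p_{2n+2}$; since that curve has trivial monodromy in the balanced superelliptic covering, its preimage is $k$ pairwise disjoint curves which together bound a planar subsurface $P$ of $\Sigma _g$ (essentially the punctured $S(n+1)$). Thus $\gamma _{2n+1}^l\cdot \gamma _{2n+1}^{l'}=0$ for all $l,l'$ and $\sum _{l=1}^{k}[\gamma _{2n+1}^l]=[\partial P]=0$ in $H_1(\Sigma _g;\Z _2)$. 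The displayed congruence then reads $k\equiv 0\pmod 2$, contradicting that $k$ is odd. (The same argument works with $S(1)$ and the curves $\gamma _1^l$.)

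Now suppose $n\geq 2$ is even, where $k$ may be even so the relation above is useless and one needs a dependence whose term count or intersection sum genuinely feels the parity of $n$. Here I will read the curves off Figures~\ref{fig_scc_c_il} and \ref{fig_scc_a_il} under the identification of Figure~\ref{fig_isotopy_surface_3-handles} and record, block by block (each of the $n$ subsurfaces of genus $k-1$), the classes $[\gamma _i^l],[\alpha _i^l]$ together with their mod-$2$ intersection numbers. I expect a ``chain-direction'' dependence assembled from one curve in each of the $n$ blocks, for instance an appropriate $\Z _2$-combination of $\gamma _1^1,\gamma _3^1,\dots ,\gamma _{2n+1}^1$ and $\alpha _1^1,\dots ,\alpha _n^1$, in which curves from distinct blocks are disjoint, so that the double sum of intersection numbers reduces to a bounded quantity plus a term proportional to $n$, while $r$ equals $n$ up to a constant; the congruence then becomes $n\equiv \text{const}\pmod 2$ and fails for $n$ even. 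Combining the two cases proves the lemma.

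The part I expect to be the main obstacle is the second one: pinning down the exact $\Z _2$-relation among the vanishing cycles within a block and tracking the parity of the double sum of the intersection numbers $\gamma _i^l\cdot \gamma _j^m$ and $\gamma _i^l\cdot \alpha _j^m$, since these curves thread through the $k$ annuli of each block and the whole argument hinges on that single parity. I would keep this under control by choosing a relation with as few curves as possible, supported in as few blocks as possible, and by computing the intersection numbers one block at a time, using that curves lying in different blocks do not intersect.
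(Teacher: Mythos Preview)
Your treatment of the odd-$k$ case is complete and is exactly the paper's argument (the paper uses the curves $\gamma _1^1,\dots ,\gamma _1^k$, you use $\gamma _{2n+1}^1,\dots ,\gamma _{2n+1}^k$; as you note, either works).

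The gap is in the even-$n$ case: you have not actually produced the $\Z _2$-relation, you only say you ``expect'' one built from $\gamma _1^1,\gamma _3^1,\dots ,\gamma _{2n+1}^1$ together with $\alpha _1^1,\dots ,\alpha _n^1$, and you flag the intersection bookkeeping as the main obstacle. In fact the $\alpha $-curves are unnecessary and the relation is as clean as in the odd-$k$ case. The $n+1$ curves $\gamma _1^1,\gamma _3^1,\dots ,\gamma _{2n+1}^1$ are pairwise disjoint (they all sit in the ``level $l=1$'' sheet of Figure~\ref{fig_scc_c_il}, and odd-indexed $\gamma $'s with different subscripts miss each other) and satisfy
\[
\gamma _1^1+\gamma _3^1+\cdots +\gamma _{2n+1}^1=0\quad \text{in }H_1(\Sigma _g;\Z _2),
\]
since $\gamma _{2n+1}^1$ is $\Z _2$-homologous to $\gamma _1^1+\gamma _3^1+\cdots +\gamma _{2n-1}^1$ (equivalently, these $n+1$ curves cobound a planar subsurface, the ``$l=1$'' slice running across all $n$ blocks). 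Your displayed congruence then reads $n+1\equiv 0\pmod 2$, contradicting that $n$ is even. This is precisely the paper's proof; once you see the right relation there is no intersection-number bookkeeping at all, because all the curves involved are disjoint.
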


\begin{proof}
Let $q \colon H_1(\Sigma _g;\Z _2)\to \Z _2$ be a quadratic form such that $q(\gamma )=1\in \Z _2$ for each vanishing cycle $\gamma $ of $f_{g,k} \colon X_{g,k} \to S^2$. 
By Propositions~\ref{prop_Stipsicz1} and \ref{prop_Stipsicz2}, it is enough to prove that there is no such a quadratic form for $k\geq 3$ is odd or $n\geq 2$ is even. 

First, we assume that $k\geq 3$ is odd. 
As a $\Z _2$-homology class, we have $\gamma _1^1+\gamma _1^2+\cdots +\gamma _1^{k}=0 \in H_1(\Sigma _g;\Z _2)$. 
Since $\gamma _1^l\cdot \gamma _1^{l^\prime }=0\in \Z _2$ for $1\leq l, l^\prime \leq k$, we have 
\begin{eqnarray*}
0&=&q(\gamma _1^1+\gamma _1^2+\cdots +\gamma _1^{k})\\
&=&q(\gamma _1^1)+q(\gamma _1^2)+\cdots + q(\gamma _1^{k})\\
&=&k\\
&=&1\quad \text{in }\Z_2.
\end{eqnarray*}
This is a contradiction and we see that there is not a quadratic form $q \colon H_1(\Sigma _g;\Z _2)\to \Z _2$ such that $q(\gamma )=1\in \Z _2$ for each vanishing cycle $\gamma $ of $f_{g,k} \colon X_{g,k} \to S^2$ when $k\geq 3$ is odd. 

Next, we assume that $n\geq 2$ is even. 
As a $\Z _2$-homology class, we have $\gamma _1^1+\gamma _3^1+\cdots +\gamma _{2n+1}^{1}=0 \in H_1(\Sigma _g;\Z _2)$. 
Since $\gamma _{2i-1}^1\cdot \gamma _{2i^\prime -1}^{1}=0\in \Z _2$ for $1\leq i, i^\prime \leq n+1$, we have 
\begin{eqnarray*}
0&=&q(\gamma _1^1+\gamma _3^1+\cdots +\gamma _{2n+1}^{1})\\
&=&q(\gamma _1^1)+q(\gamma _3^1)+\cdots + q(\gamma _{2n+1}^{1})\\
&=&n+1\\
&=&1\quad \text{in }\Z_2.
\end{eqnarray*}
This is a contradiction and we see that there is not a quadratic form $q \colon H_1(\Sigma _g;\Z _2)\to \Z _2$ such that $q(\gamma )=1\in \Z _2$ for each vanishing cycle $\gamma $ of $f_{g,k} \colon X_{g,k} \to S^2$ when $n\geq 2$ is even. 
We have completed the proof of Lemma~\ref{lem_nonspin1}. 

\end{proof}

Proposition~\ref{prop_nonspin} is completed by Lemma~\ref{lem_nonspin1} and the next lemma. 

\begin{lem}\label{lem_nonspin2}
If $k\geq 4$ is even and $n\geq 1$ is odd, then $X_{g,k}$ has no spin structure. 
\end{lem}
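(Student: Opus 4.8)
The plan is to mimic the structure of the proof of Lemma~\ref{lem_nonspin1}: assume for contradiction that there is a quadratic form $q\colon H_1(\Sigma_g;\Z_2)\to\Z_2$ with $q(\gamma)=1$ for every vanishing cycle $\gamma$ of $f_{g,k}$, and derive a contradiction when $k\geq 4$ is even and $n\geq 1$ is odd. The two linear relations used in Lemma~\ref{lem_nonspin1} now both give consistent parities ($k$ even makes $\sum_l \gamma_1^l$ force $q$-value $k\equiv 0$, and $n+1$ even likewise), so they carry no information; I would look instead for a $\Z_2$-homology relation that mixes the curves $\alpha_i^l$ and $\gamma_j^l$ (and possibly exploits the nonzero intersection numbers among them, so that the cross-terms $c\cdot c'$ in the quadratic form contribute). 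The natural candidate is a relation of the form $\sum_{l} \alpha_i^l + (\text{combination of }\gamma\text{'s}) = 0$ coming from the fact that the $\alpha_i^l$ together with the relevant $\gamma$-curves bound a subsurface of $\Sigma_g$ (a copy of $S(i+1)$ punctured and capped, or one of the connecting tubes), so the homology class of that combination is zero.

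First I would pin down, from Figures~\ref{fig_scc_c_il} and \ref{fig_scc_a_il} and the surface picture in Figure~\ref{fig_surface-homeo2}, a single $\Z_2$-cycle relation of the shape
\[
\sum_{l=1}^{k-1}\alpha_i^l \;+\; \sum_{l}\bigl(\gamma_{2i-1}^l+\gamma_{2i+1}^l\bigr)\;=\;0\in H_1(\Sigma_g;\Z_2)
\]
(or whatever the correct incidences turn out to be — the point is that $\alpha_i^l$ is homologous to a combination of the $\gamma$'s in the two handle-families adjacent to it). Applying $q$ to such a relation and using $q(c+c')=q(c)+q(c')+c\cdot c'$ repeatedly expands the left side into $\sum q(\gamma) + \sum (\text{pairwise intersections})$; since every $q(\gamma)=1$, the $q$-terms contribute the total number of curves in the relation mod $2$, and it remains to compute $\sum_{\text{pairs}} c\cdot c' \bmod 2$ over the curves appearing. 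The curves $\alpha_i^l$ and $\gamma_j^{l'}$ have geometric intersection $0$ or $1$ according to the picture, while $\alpha_i^l\cdot\alpha_i^{l'}$ and the $\gamma$'s among themselves are as recorded in the proof of Theorem~\ref{main_thm} (the $\gamma_i^l$ are pairwise disjoint within a column, and $\alpha_i^l$ meets $\alpha_i^{l+1}$ once). Counting these, with $k$ even and $n$ odd, should produce $0=1$ in $\Z_2$, the desired contradiction; the arithmetic is the analogue of the "$k=1$" and "$n+1=1$" computations but now with a nonzero intersection-correction term doing the work.

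The main obstacle will be getting the homology relation exactly right and, relatedly, pinning down the mod-$2$ intersection pattern among the $\alpha$'s and $\gamma$'s precisely from the figures — a sign or incidence error there changes the parity and kills the argument. I expect the cleanest route is to restrict attention to one "block" of the surface (the piece built from $S(i+1)$ together with the two tubes on either side, which is itself a small subsurface with the $\alpha_i^l$ and the neighbouring $\gamma$-curves on it) and to use the fact that on that subsurface the relevant curves satisfy a single relation with a known self-intersection form; then the computation is local and uniform in $i$, and one only has to track how many blocks ($n$ of them, or $n+1$ depending on convention) are combined, which is where the parity of $n$ enters, together with the parity of $k$ which enters through the number of $l$-levels. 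Once the relation and the intersection count are fixed, the contradiction via $q$ is a one-line expansion exactly as in Lemma~\ref{lem_nonspin1}, and the proof concludes by invoking Propositions~\ref{prop_Stipsicz1} and \ref{prop_Stipsicz2}.
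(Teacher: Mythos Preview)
Your approach cannot work: you are trying to show that no quadratic form $q\colon H_1(\Sigma_g;\Z_2)\to\Z_2$ with $q(\gamma)=1$ on every vanishing cycle exists, but in the case $k$ even and $n$ odd such a form \emph{does} exist. Indeed, define $q$ on the basis $\{\gamma_i^l : 1\le i\le 2n,\ 1\le l\le k-1\}$ by $q(\gamma_i^l)=1$. One then checks, exactly as the paper does, that $q(\gamma_i^k)=k-1\equiv 1$, $q(\gamma_{2n+1}^l)=n\equiv 1$, and $q(\alpha_i^l)=q(\gamma_{2i}^l)+q(\gamma_{2i-1}^{l+1})+\gamma_{2i}^l\cdot\gamma_{2i-1}^{l+1}=1+1+1=1$. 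So by Proposition~\ref{prop_Stipsicz1} the complement $X_{g,k}-\nu(\Sigma_g)$ \emph{is} spin, and any $\Z_2$-homology relation you write down among the vanishing cycles will, after expanding via $q$, yield $0=0$, not a contradiction.

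The paper therefore proceeds through the \emph{second} condition in Proposition~\ref{prop_Stipsicz2}: once $X_{g,k}-\nu(\Sigma_g)$ is spin, $X_{g,k}$ is spin iff some dual of the fiber has even self-intersection. Lemma~\ref{lem_section} supplies a $(-1)$-section, whose image is a dual with self-intersection $-1\equiv 1\pmod 2$; since the complement is spin, the mod~$2$ self-intersection of a dual is independent of the choice, so every dual has odd self-intersection and $X_{g,k}$ is not spin. The missing idea in your plan is precisely this use of the $(-1)$-section and the dual-class criterion; the quadratic-form obstruction of Proposition~\ref{prop_Stipsicz1} alone is vacuous here.
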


\begin{proof}
First, we prove that $X_{g,k}-\nu (\Sigma _g)$ admits a spin structure. 
Since $\{ \gamma _i^l\mid 1\leq i\leq 2n, 1\leq l\leq k-1\}$ is a basis of $H_1(\Sigma _g; \Z _2)$, a quadratic form on $H_1(\Sigma _g; \Z _2)$ is determined by the image of $\gamma _i^l$. 
We denote a quadratic form $q \colon H_1(\Sigma _g;\Z _2)\to \Z _2$ by $q(\gamma _i^l)=1$ for any $1\leq i\leq 2n$ and $1\leq l\leq k-1$. 
By Proposition~\ref{prop_Stipsicz1}, it is enough for proving that $X_{g,k}-\nu (\Sigma _g)$ admits a spin structure to show that $q(\gamma _i^k)=1$ for $1\leq i\leq 2n$, $q(\gamma _{2n+1}^l)=1$ for $1\leq l\leq k$, and $q(\alpha _i^l)=1$ for $1\leq i\leq n$ and $1\leq l\leq k-1$.  

As a $\Z _2$-homology class, we have $\gamma _i^k=\gamma _i^1+\cdots +\gamma _i^{k-1}$ for $1\leq i\leq 2n$. 
Since $\gamma _i^l\cdot \gamma _i^{l^\prime }=0\in \Z _2$ for $1\leq l, l^\prime \leq k-1$, we have
\begin{eqnarray*}
q(\gamma _i^k)&=&q(\gamma _i^1+\cdots +\gamma _i^{k-1})=q(\gamma _i^1)+\cdots +q(\gamma _i^{k-1})\\
&=&k-1\\
&=&1\quad \text{in }\Z_2.
\end{eqnarray*}
Similarly, as a $\Z _2$-homology class, we have $\gamma _{2n+1}^l=\gamma _1^l+\gamma _3^l+\cdots +\gamma _{2n-1}^{l}$ for $1\leq l\leq k$. 
Since $\gamma _{2i-1}^l\cdot \gamma _{2i^\prime -1}^{l}=0\in \Z _2$ for $1\leq i, i^\prime \leq n+1$, we have 
\begin{eqnarray*}
q(\gamma _{2n+1}^l)&=&q(\gamma _1^l+\gamma _3^l+\cdots +\gamma _{2n-1}^{l})=q(\gamma _1^l)+q(\gamma _3^l)+\cdots +q(\gamma _{2n-1}^{l})\\
&=&n\\
&=&1\quad \text{in }\Z_2.
\end{eqnarray*}
As a $\Z _2$-homology class, we also have $\alpha _i^l=\gamma _{2i}^l+\gamma _{2i-1}^{l+1}$ for $1\leq i\leq n$ and $1\leq l\leq k-1$. 
Since $\gamma _{2i}^l\cdot \gamma _{2i-1}^{l+1}=1\in \Z _2$, we have 
\begin{eqnarray*}
q(\alpha _i^l)&=&q(\gamma _{2i}^l+\gamma _{2i-1}^{l+1})=q(\gamma _{2i}^l)+q(\gamma _{2i-1}^{l+1})+\gamma _{2i}^l\cdot \gamma _{2i-1}^{l+1}\\
&=&1+1+1\\
&=&1\quad \text{in }\Z_2.
\end{eqnarray*}
Thus, $q \colon H_1(\Sigma _g;\Z _2)\to \Z _2$ is a quadratic form such that $q(\gamma )=1\in \Z _2$ for each vanishing cycle $\gamma $ of $f_{g,k} \colon X_{g,k} \to S^2$, and by Proposition~\ref{prop_Stipsicz1}, $X_{g,k}-\nu (\Sigma _g)$ admits a spin structure. 

By this fact and Proposition~\ref{prop_Stipsicz2}, $X_{g,k}$ admits a spin structure if and only if for some dual $\gamma \in H_2(X;\Z )$ of $\Sigma _g$, we have $\gamma \cdot \gamma \equiv 0$ mod $2$. 
By Lemma~\ref{lem_section}, the Lefschetz fibration $f_{g,k} \colon X_{g,k} \to S^2$ has a $(-1)$-section $s\colon S^2\to X_{g,k}$. 
Since $s(S^2)$ transversely intersects with the regular fiber $\Sigma _g$ at one point, the homology class $[s(S^2)]\in H_2(X_{g,k};\Z )$ is a dual of $\Sigma _g$ with $[s(S^2)]\cdot [s(S^2)]=-1\equiv 1\not \equiv 0$ mod $2$. 
The mod $2$ self-intersection number of a dual of $\Sigma _g$ does not depend on a choice of a dual since $X_{g,k}-\nu (\Sigma _g)$ admits a spin structure. 
Thus, the mod $2$ self-intersection number of any dual of $\Sigma _g$ is 1. 
Therefore, by Proposition~\ref{prop_Stipsicz2}, $X_{g,k}$ does not admit a spin structure and we have completed the proof of Lemma~\ref{lem_nonspin2}. 

\end{proof}

\begin{rem}
When $n=1$, namely $g=k-1\geq 2$, the Lefschetz fibration $f_{g,k}\colon X_{g,k}\to S^2$ corresponds to the positive relator $(\widetilde{h}_1\widetilde{t}_{3}\widetilde{a}_1)^k$.
Then we remark that $k=g+1$ and 
\begin{align*}
\widetilde{h}_{1}&=t_{\gamma _{1}^1}t_{\gamma _{2}^{1}}t_{\gamma _{1}^2}t_{\gamma _{2}^{2}}t_{\gamma _{1}^3}\cdots t_{\gamma _{2}^{k-1}}t_{\gamma _{1}^{k}}, \\
\widetilde{t}_{3}&=\widetilde{t}_{1}=t_{\gamma _1^1}t_{\gamma _1^2}\cdots t_{\gamma _1^k},\\
\widetilde{a}_1&=t_{\alpha _1^1}t_{\alpha _1^2}\cdots t_{\alpha _1^{k-1}}.
\end{align*}
Since $t_{\gamma _1^{l+1}}(\alpha _1^l)=\gamma _2^l$ for $1\leq l\leq k-1$, by elementary transformations, we have 
\begin{eqnarray*}
&&(\widetilde{h}_1\widetilde{t}_{3}\widetilde{a}_1)^{g+1}\\
&=&\Bigl( (t_{\gamma _{1}^1}t_{\gamma _{2}^{1}}t_{\gamma _{1}^2}t_{\gamma _{2}^{2}}t_{\gamma _{1}^3}\cdots t_{\gamma _{2}^{k-1}}t_{\gamma _{1}^{k}})(t_{\gamma _1^1}t_{\gamma _1^2}\underset{\rightarrow }{\underline{t_{\gamma _1^3}}}\cdots \underset{\rightarrow }{\underline{t_{\gamma _1^k}}}\cdot t_{\alpha _1^1}t_{\alpha _1^2}\cdots t_{\alpha _1^{k-1}})\Bigr) ^{g+1}\\
&\sim &\left( (t_{\gamma _{1}^1}t_{\gamma _{2}^{1}}t_{\gamma _{1}^2}t_{\gamma _{2}^{2}}t_{\gamma _{1}^3}\cdots t_{\gamma _{2}^{k-1}}t_{\gamma _{1}^{k}})(t_{\gamma _1^1}\underline{t_{\gamma _1^2}t_{\alpha _1^1}}\cdot \underline{t_{\gamma _1^3}t_{\alpha _1^2}}\cdots \underline{t_{\gamma _1^k}t_{\alpha _1^{k-1}}})\right) ^{g+1}\\
&\sim &\Bigl( (t_{\gamma _{1}^1}t_{\gamma _{2}^{1}}t_{\gamma _{1}^2}t_{\gamma _{2}^{2}}t_{\gamma _{1}^3}\cdots t_{\gamma _{2}^{k-1}}t_{\gamma _{1}^{k}})(t_{\gamma _1^1}t_{t_{\gamma _1^2}(\alpha _1^1)}t_{\gamma _1^2}\cdot t_{t_{\gamma _1^3}(\alpha _1^2)}t_{\gamma _1^3}\cdots t_{t_{\gamma _1^k}(\alpha _1^{k-1})}t_{\gamma _1^k})\Bigr) ^{g+1}\\
&=&\Bigl( (t_{\gamma _{1}^1}t_{\gamma _{2}^{1}}t_{\gamma _{1}^2}t_{\gamma _{2}^{2}}t_{\gamma _{1}^3}\cdots t_{\gamma _{2}^{k-1}}t_{\gamma _{1}^{k}})(t_{\gamma _1^1}t_{\gamma _2^1}t_{\gamma _1^2}t_{\gamma _2^2}t_{\gamma _1^3}\cdots t_{\gamma _2^{k-1}}t_{\gamma _1^k})\Bigr) ^{g+1}\\
&=&(t_{\gamma _1^1}t_{\gamma _2^1}t_{\gamma _1^2}t_{\gamma _2^2}t_{\gamma _1^3}\cdots t_{\gamma _2^{k-1}}t_{\gamma _1^k})^{2g+2}.
\end{eqnarray*}
Hence the Lefschetz fibration $f_{g,k}\colon X_{g,k}\to S^2$ for $n=1$ and $k\geq 3$ with $g=n(k-1)$ is isomorphic to a Lefschetz fibration of genus $g$ which is corresponding to a $(2g+1)$-chain relation. 
Then, by Proposition~3.10 in \cite{Endo-Nagami}, the signature of $X_{g,k}$ is equal to $-2g(g+2)$, and by Lemma~10 in \cite{Akhmedov-Monden}, $f_{g,k}\colon X_{g,k}\to S^2$ is obtained as a double branched covering of $\mathbb{CP}^2\sharp \overline{\mathbb{CP}^2}$ which is branched along some smooth algebraic curve. 
\end{rem}

\par
{\bf Acknowledgement:} The author would like to express his gratitude to Hisaaki Endo, for his encouragement and helpful advices. 
The authors also wish to thank Takahiro Oba for his comments and helpful advices. 
The author was supported by JSPS KAKENHI Grant Numbers JP19K23409 and JP21K13794.

\end{document}